\newtheorem{theorem}{Theorem}[section]
\newtheorem{corollary}[theorem]{Corollary}
\newtheorem{lemma}[theorem]{Lemma}
\theoremstyle{definition}
\newtheorem{conjecture}[theorem]{Conjecture}
\newtheorem{definition}[theorem]{Definition}
\newtheorem{remark}[theorem]{Remark}
\begin{document}

\title{Homology equivalences of manifolds and zero-in-the-spectrum examples}
\author{Shengkui Ye}
\maketitle
\begin{abstract}   
Working with group homomorphisms, a construction of manifolds is introduced
to preserve homology groups. The construction gives as special cases
Qullien's plus construction with handles obtained by Hausmann, the existence
of one-sided $h$-cobordism of Guilbault and Tinsley, the existence of
homology spheres and higher-dimensional knots proved by Karvaire. We also
use it to get counter-examples to the zero-in-the-spectrum conjecture found
by Farber-Weinberger, and by Higson-Roe-Schick.
\end{abstract}

\section{Introduction}

The aim of this note is to propose a general surgery plus construction on
manifolds. This is a manifold version of the generalized plus construction
for CW complexes found by the author in \cite{Ye}. As applications, we give
a unified approach to the plus construction with handles of Hausmann \cite%
{ha}, the (mod $L$)-one-sided $h$-cobordism of Guilbault and Tinsley \cite{gt},
the existence of homology spheres of Kervaire \cite{kar} and the existence
of higher-dimensional knots of Kervaire \cite{ker}. We also use it to get
some examples for the zero-in-the-spectrum conjecture found by
Farber-Weinberger \cite{FW} and Higson-Roe-Schick \cite{hig}. First, we
briefly review these existing works.

Let $M$ be an $n$-dimensional $(n\geq 5)$ closed manifold with fundamental
group $\pi _{1}(M)=H$. Suppose that $\Phi :H\rightarrow G$ is a surjective
group homomorphism of finitely presented groups with the kernel $\ker \Phi $
a perfect subgroup. Hausmann shows that Quillen's plus construction with
respect to $\ker \Phi $ can by made by adding finitely many two and three
handles to $M\times 1\subset M\times \lbrack 0,1]$ (cf. Section 3 in Hausmann \cite%
{ha} and Definition of $\varphi _{1}$ on page 115 in Hausmann \cite{ha2}). The
resulting cobordism $(W;M,M^{\prime })$ has $W$ and $M^{\prime }$ the
homotopy type of the Quillen plus construction $M^{+}.$ In other words, the
fundamental group $\pi _{1}(M^{\prime })=\pi _{1}(W)=G$ and for any $\mathbb{%
Z[}G]$-module $N,$ the inclusion map $M\hookrightarrow W$ induces homology
isomorphisms $H_{\ast }(M;N)\cong H_{\ast }(W;N).$

Guilbault and Tinsley \cite{gt} obtain a generalized manifold plus
construction, as follows. Let $(W,Y)$ be a connected CW pair and $L$ a
normal subgroup of $\pi _{1}(Y).$ The inclusion $Y\hookrightarrow W$ is
called a (mod $L$)-homology equivalence if it induces an isomorphism of
fundamental groups and is a homology equivalence with coefficients $\mathbb{Z%
}[\pi _{1}(Y)/L].$ A compact cobordism $(W;X,Y)$ is a \emph{(mod }$L$\emph{%
)-one-sided }$h$\emph{-cobordism} if $X\hookrightarrow W$ induces a
surjection of fundamental groups and $Y\hookrightarrow W$ is a (mod $L$%
)-homology equivalence. Let $B$ be a closed $n$-manifold $(n\geq 5)$ and $%
\alpha :\pi _{1}(B)\rightarrow G$ a surjective homomorphism onto a finitely
presented group such that $\ker (\alpha )$ is strongly $L^{\prime }$%
-perfect, \textsl{i.e.} $\ker (\alpha )=[\ker (\alpha ),L^{\prime }]$ for
some group $L^{\prime },$ where $\ker (\alpha )\trianglelefteq L^{\prime
}\trianglelefteq \pi _{1}(B)$. For $L=L^{\prime }/\ker (\alpha ),$ Guilbault
and Tinsley \cite{gt} show that there exists a (mod $L$)-one-sided $h$%
-cobordism $(W;B,A)$ such that $\pi _{1}(W)=G$ and $\ker (\pi
_{1}(B)\rightarrow \pi _{1}(W))=\ker (\alpha ).$

An $n$-dimensional homology sphere is a closed manifold $M$ having the
homology groups of the $n$-sphere, \textsl{i.e.} $H_{\ast }(M)\cong H_{\ast
}(S^{n}).$ Let $\pi $ be a finitely presented group and $n\geq 5.$ Kervaire
shows that there exists an $n$-dimensional homology sphere $M$ with $\pi
_{1}(M)=\pi $ if and only if the homology groups satisfy $H_{1}(\pi ;\mathbb{%
Z})=H_{2}(\pi ;\mathbb{Z})=0.$

For an integer $n\geq 1,$ define an $n$-knot to be a differential imbedding $%
f:S^{n}\rightarrow S^{n+2}$ and the group of the $n$-knot $f$ to be $\pi
_{1}(S^{n+2}-f(S^{n})).$ Let $G$ be a finitely presentable group. The weight
$w(G)$ is the smallest integer $k$ such that there exists a set of $k$
elements $\alpha _{1},\alpha _{2},\ldots ,\alpha _{k}\in G$ whose normal
closure equals $G.$ Kervaire \cite{ker} shows that a finitely presented
group $G$ is the group of a $n$-knot $(n\geq 5)$ if and only if $H_{1}(G;%
\mathbb{Z)=Z}$, $H_{2}(G;\mathbb{Z})=0$ and the weight of $G$ is $1.$

The zero-in-the-spectrum conjecture goes back to Gromov, who asked for a
closed, aspherical, connected and oriented Riemannian manifold $M$ whether
there always exists some $p\geq 0,$ such that zero belongs to the spectrum
of the Laplace-Beltrami operator $\Delta _{p}$ acting on the square
integrable $p$-forms on the universal covering $\tilde{M}$ of $M.$ Farber
and Weinberger \cite{FW} show that the conjecture is not true if the
condition that $M$ is aspherical is dropped. More generally, Higson, Roe and
Schick \cite{hig} show that for a finitely presented group $G$ satisfying $%
H_{0}(G;C_{r}^{\ast }(G))=H_{1}(G;C_{r}^{\ast }(G))=H_{2}(G;C_{r}^{\ast
}(G))=0,$ there always exists a closed manifold $Y$ of dimension $n$ $(n\geq
6)$ with $\pi _{1}(Y)=G$ such that $Y$ is a counterexample to the conjecture
if $M$ is not required to be aspherical.

In this note, a more general construction is provided to preserve homology
groups. For this, we have to introduce the notion of a finitely $G$-dense
ring.

\begin{definition}
\label{fgdense}A finitely $G$\emph{-dense ring} $(R,\phi )$ is a unital ring
$R$ together with a ring homomorphism $\phi :\mathbb{Z}[G]\rightarrow R$
such that, when $R$ is regarded as a left $\mathbb{Z}[G]$-module via $\phi ,$
for any finitely generated right $\mathbb{Z}[G]$-module $M,$ finitely
generated free right $R$-module $F$ and $R$-module surjection $%
f:M\bigotimes\nolimits_{\mathbb{Z}[G]}R\twoheadrightarrow F,$ the module $F$ has a
finite $R$-basis in $f(M\otimes 1).$
\end{definition}

This is an analog of $G$-dense rings defined in Ye \cite{Ye}. Examples of
finitely $G$-dense rings include the real reduced group $C^{\ast }$-algebra $%
C_{\mathbb{R}}^{\ast }(G),$ the real group von Neumann algebra $\mathcal{N}_{%
\mathbb{R}}G,$ the real Banach algebra $l_{\mathbb{R}}^{1}(G),$ the rings $k=%
\mathbb{Z}/p$ (prime $p$) and $k\subseteq \mathbb{Q}$ a subring of the
rationals (with trivial $G$-actions), the group ring $k[G],$ and so on.

\textit{Conventions.} Let $\pi $ and $G$ be two groups. Suppose that $R$ is
a $\mathbb{Z}[G]$-module and $BG,B\pi $ are the classifying spaces. For a
group homomorphism $\alpha :\pi \rightarrow G,$ we will denote by $%
H_{*}(G,\pi ;R)$ the relative homology group $H_{*}(BG,B\pi ;R)$ with
coefficients $R.$ All manifolds are assumed to be connected smooth
manifolds, until otherwise stated.

Our main result is the following.

\begin{theorem}
\label{th3}Assume that $G$ is a finitely presented group and $(R,\phi )$ is
a finitely $G$-dense ring. Let $X$ be a connected $n$-dimensional ($n\geq 5$%
) closed orientable manifold with fundamental group $\pi =\pi _{1}(X)$.
Assume that $\alpha :\pi \rightarrow G$ is a group homomorphism of finitely
presented groups such that the image $\alpha (\pi )$ is finitely presented
and
\begin{eqnarray*}
H_{1}(\alpha ) &:&H_{1}(\pi ;R)\rightarrow H_{1}(G;R)\text{ is injective,
and } \\
H_{2}(\alpha ) &:&H_{2}(\pi ;R)\rightarrow H_{2}(G;R)\text{ is surjective.}
\end{eqnarray*}%
Suppose either that $R$ is a principal ideal domain or that the relative
homology group $H_{1}(G,\pi ;R)$ is a stably free $R$-module. When $2$ is
not invertible in $R,$ suppose that the manifold $M$ is a spin manifold.
Then there exists a closed $R$-orientable manifold $Y$ with the following
properties:

\begin{enumerate}
\item[(i)] $Y$ is obtained from $X$ by attaching 1-handles, 2-handles and
3-handles, such that

\item[(ii)] $\pi _{1}(Y)\cong G$ and the inclusion map $g:X\rightarrow W,$
the cobordism between $X$ and $Y$, induces the same fundamental group
homomorphism as $\alpha ,$ and

\item[(iii)] for any integer $q\geq 2$ the map $g$ induces an isomorphism of
homology groups%
\begin{equation}
g_{\ast }:H_{q}(X;R)\overset{\cong }{\rightarrow }H_{q}(W;R).  \tag{1}
\end{equation}
\end{enumerate}
\end{theorem}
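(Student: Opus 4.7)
The strategy is a handle-theoretic analog of the generalized plus construction for CW complexes developed in \cite{Ye}, executed in the manifold setting along the lines of Hausmann's realization of Quillen's classical plus construction \cite{ha}. The plan is to work inside $X\times[0,1]$ and attach handles to $X\times\{1\}$ in three stages---first $1$-handles, then $2$-handles, then $3$-handles---producing a cobordism $(W;X,Y)$ whose top end is the required manifold $Y$.

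\emph{Stages 1 and 2 (realizing $G$ as $\pi_1$).} Since both $\alpha(\pi)$ and $G$ are finitely presented, one can present $G$ over $\alpha(\pi)$ by finitely many new generators and finitely many relations. For each new generator, attach a $1$-handle to $X\times\{1\}$, enlarging the fundamental group to $\pi*F$ with $F$ finitely generated free. The kernel of the canonical map $\pi*F\to G$ is normally generated by finitely many elements: the defining relations of $G$ over $\alpha(\pi)$ together with finitely many normal generators of $\ker\alpha$. Realize these as embedded loops in general position and attach a $2$-handle along each. Van Kampen's theorem then gives $\pi_1=G$, with the inclusion realizing $\alpha$.

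\emph{Stage 3 (killing excess $H_2$).} Call the cobordism built so far $W_2$. The handle chain complex with $R$-coefficients (twisted by $\phi$) shows $H_q(W_2,X;R)=0$ for $q\neq 1,2$; in degree $1$, $H_1(W_2,X;R)\cong H_1(G,\pi;R)$ vanishes by the injectivity of $H_1(\alpha)$ together with the long exact sequence of $(BG,B\pi)$; in degree $2$ one obtains a finitely generated $R$-module $N$. The surjectivity of $H_2(\alpha)$, combined with the alternative that $R$ is a PID or $H_1(G,\pi;R)$ is stably free, identifies $N$ as a stably free $R$-module, so after introducing finitely many canceling $2$/$3$-handle pairs one may assume $N$ is free. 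Here the finitely $G$-dense hypothesis is essential: geometric $2$-spheres in $W_2$ naturally produce classes in an $H_2$ with $\mathbb{Z}[G]$-coefficients, and Definition \ref{fgdense} allows us to choose a finite $R$-basis of $N$ represented by such geometric classes. General position ($n\geq 5$) makes these $2$-spheres embedded; the spin hypothesis (when $2\notin R^\times$) ensures trivial normal bundles so that $3$-handles attach cleanly. Killing the basis classes by $3$-handles produces $Y$, and the long exact sequence of $(W,X;R)$ yields (iii); $R$-orientability of $Y$ follows from that of $X$ together with the framing control in the handle attachments.

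\emph{Main obstacle.} The principal difficulty is bridging algebra and geometry in Stage 3: an abstract $R$-basis of the excess module $N$ need not be directly representable by genuine $2$-spheres, and geometric spheres naturally carry $\mathbb{Z}[G]$-module data rather than $R$-module data. The finitely $G$-dense property is the precise algebraic condition that allows one to lift an $R$-basis of $N$ to elements coming from $\mathbb{Z}[G]$-module generators, hence to embedded $2$-spheres in $W_2$. The alternative hypotheses on $R$ together with the spin condition deal with the remaining stabilization and framing issues that arise when converting algebraic data into embedded handles.
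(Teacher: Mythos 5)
Your outline follows essentially the same three-stage handle construction as the paper (1-handles for new generators, 2-handles for the relations and for normal generators of $\ker\alpha$, then 3-handles to kill the excess in degree 2 after using the finitely $G$-dense property to find a geometric $R$-basis), so the overall route matches.

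However, there is a concrete error in Stage 3. You claim that $H_1(W_2,X;R)\cong H_1(G,\pi;R)$ \emph{vanishes} by injectivity of $H_1(\alpha)$. This is false, and inconsistent with the theorem's hypothesis that $H_1(G,\pi;R)$ be a stably free $R$-module (a hypothesis that would be vacuous if the group vanished). From the long exact sequence of $(BG,B\pi)$,
\begin{equation*}
H_1(\pi;R)\to H_1(G;R)\to H_1(G,\pi;R)\to H_0(\pi;R)\to H_0(G;R),
\end{equation*}
vanishing of $H_1(G,\pi;R)$ would require \emph{surjectivity} of $H_1(\alpha)$ and injectivity of $H_0(\pi;R)\to H_0(G;R)$, neither of which is assumed. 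In the motivating case $\pi=1$, $G\neq 1$ (used in Corollary \ref{zero}) one has $H_1(G,1;R)\cong R\neq 0$. What the injectivity of $H_1(\alpha)$ actually buys is something different: in the long exact sequence of the pair $(W',X)$ it forces the connecting map $H_2(W',X;R)\to H_1(X;R)$ to be zero, hence the map $j_1\colon H_2(W';R)\to H_2(W',X;R)$ to be \emph{onto}. Combined with the Hopf exact sequences (Lemma \ref{lem1}) for $X$ and $W'$ and the surjectivity of $H_2(\alpha)$, a diagram chase then shows that the composite $H_2(\tilde W')\otimes_{\mathbb{Z}G}R\to H_2(W';R)\to H_2(W',X;R)$ is onto; this is precisely what lets one represent a free $R$-basis of $H_2(W',X;R)$ by honest $\pi_2$-classes, i.e.\ by $2$-spheres. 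Your sketch gestures at this step (``geometric $2$-spheres naturally produce classes with $\mathbb{Z}[G]$-coefficients'') but does not make the role of the two homological hypotheses explicit, and the incorrect vanishing claim suggests the mechanism was not fully understood. The role of the PID / stably-free-$H_1(G,\pi;R)$ alternative is, as you later say, to guarantee that $H_2(W',X;R)=\ker(C_2\to C_1)$ is stably free and hence, after adding trivial $2$/$3$-handle pairs, free; that part of your argument is correct and is the right use of the hypothesis.
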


Theorem \ref{th3} has the following applications.\medskip

\begin{enumerate}
\item[(1)] \noindent $\alpha $ \textbf{surjective}.
\end{enumerate}

\begin{itemize}
\item When $\ker \alpha $ is perfect and $R=\mathbb{Z}[G]$ the group ring,
this is Quillen's plus construction with handles, which is obtained by
Hausmann \cite{ha} and \cite{ha2} (see Corollary \ref{plus}).

\item When $\ker \alpha $ is strongly $L^{\prime }$-perfect and $R=\mathbb{Z}%
[G/L],$ Theorem \ref{th3} implies the existence of (mod $L$)-one-sided $h$%
-cobordism obtained by Guilbault and Tinsley \cite{gt2, gt} (see Corollary %
\ref{cgt}).
\end{itemize}

\begin{enumerate}
\item[(2)] \noindent $\pi =1$ ($\alpha $ \textbf{injective)}.
\end{enumerate}

\begin{itemize}
\item When $X=S^{n}$ ($n\geq 5$), $R=\mathbb{Z}$ and $G$ a superperfect
group, Theorem \ref{th3} recovers the existence of homology spheres, which
is obtained by Kervaire \cite{kar} (see Corollary \ref{ker}).

\item When $X=S^{n}$ ($n\geq 5$), $R=\mathbb{Z}$ and $H_{1}(G;\mathbb{Z})=\mathbb{Z}$, $%
H_{2}(G;\mathbb{Z})=0,$ Theorem \ref{th3} recovers the existence higher-dimensional
knots, which is obtained by Kervaire \cite{ker} (see Corollary \ref{knot}).

\item When $X=S^{n}$ ($n\geq 6$) and $R=C_{\mathbb{R}}^{\ast }(G),$ the
theorem yields the results obtained by Farber-Weinberger \cite{FW} and
Higson-Roe-Schick \cite{hig} on the zero-in-the-spectrum conjecture (see
Corollary \ref{zero}).
\end{itemize}

For Bousfield's integral localization, Rodr\'{\i}guez and Scevenels \cite{rs}
show that there is a topological construction that, while leaving the
integral homology of a space unchanged, kills the intersection of the
transfinite lower central series of its fundamental group. Moreover, this is
the maximal subgroup that can be factored out of the fundamental group
without changing the integral homology of a space. As another application of
Theorem \ref{th3} with $\alpha $ surjective and $R=\mathbb{Z}$, we obtain a
manifold version of Rodr\'{\i}guez and Scevenels' result.

\begin{corollary}
\label{integral}Let $n\geq 5$ and $X$ be a closed $n$-dimensional spin
manifold with fundamental group $\pi $ and $N$ a normal subgroup of $\pi .$
The following are equivalent.

\begin{enumerate}
\item[(i)] There exists a closed manifold $Y$ obtained from $X$ by adding $2$%
-handles and $3$-handles with $\pi _{1}(Y)=\pi /N,$ such that for any $q\geq
0$ there is an isomorphism%
\begin{equation*}
H_{q}(Y;\mathbb{Z})\cong H_{q}(X;\mathbb{Z}).
\end{equation*}

\item[(ii)] The group $N$ is normally generated by finitely many elements
and is a relatively perfect subgroup of $\pi ,$ \textsl{i.e.} $[\pi ,N]=N.$
\end{enumerate}
\end{corollary}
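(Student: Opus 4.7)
The plan is to deduce the corollary from Theorem~\ref{th3} in one direction and from the Hopf and Stallings five-term exact sequences in the other.

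For (ii) $\Rightarrow$ (i), I would apply Theorem~\ref{th3} to the quotient homomorphism $\alpha : \pi \twoheadrightarrow \pi/N$ with $R = \mathbb{Z}$. Since $N$ is finitely normally generated, the quotient $G := \pi/N$ is finitely presented. The Stallings five-term exact sequence
\[
H_2(\pi;\mathbb{Z}) \xrightarrow{H_2(\alpha)} H_2(\pi/N;\mathbb{Z}) \to N/[\pi,N] \to H_1(\pi;\mathbb{Z}) \xrightarrow{H_1(\alpha)} H_1(\pi/N;\mathbb{Z}) \to 0,
\]
together with the hypothesis $N = [\pi,N]$, forces $H_1(\alpha)$ to be an isomorphism and $H_2(\alpha)$ to be surjective. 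Since $\mathbb{Z}$ is a PID and $X$ is spin, the remaining hypotheses of Theorem~\ref{th3} hold. Because $\alpha$ is surjective, the construction in Theorem~\ref{th3} requires no 1-handles (these are needed only to enlarge $\pi_1$), so the cobordism $(W;X,Y)$ is built from $X\times I$ by attaching 2- and 3-handles only. The theorem yields $g_* : H_q(X;\mathbb{Z}) \xrightarrow{\cong} H_q(W;\mathbb{Z})$ for $q \geq 2$; the cases $q=0,1$ follow since $N \subseteq [\pi,\pi]$ gives $H_1(X) = \pi^{\mathrm{ab}} \cong (\pi/N)^{\mathrm{ab}} = H_1(W)$. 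To transfer the isomorphism from $W$ to $Y$, I would apply Poincar\'e--Lefschetz duality to the cobordism: $H_q(W,Y;\mathbb{Z}) \cong H^{n+1-q}(W,X;\mathbb{Z})$, which vanishes by the universal coefficient theorem since $H_*(W,X;\mathbb{Z})=0$ and all groups are finitely generated. Hence $H_q(Y;\mathbb{Z}) \cong H_q(W;\mathbb{Z}) \cong H_q(X;\mathbb{Z})$.

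For (i) $\Rightarrow$ (ii), I would first note that the attaching circles of the finitely many 2-handles represent elements $g_1,\ldots,g_k \in \pi$ whose normal closure is $\ker(\pi \to \pi_1(W)) = N$ (the 3-handles do not change $\pi_1$, and $Y \hookrightarrow W$ induces an isomorphism on $\pi_1$ when $n\geq 5$ by the dual handle decomposition); thus $N$ is finitely normally generated. To prove $N = [\pi,N]$, I would compare the Hopf surjections $H_2(X;\mathbb{Z}) \twoheadrightarrow H_2(\pi;\mathbb{Z})$ and $H_2(Y;\mathbb{Z}) \twoheadrightarrow H_2(\pi/N;\mathbb{Z})$ via the classifying maps, which fit into a commutative square together with the cobordism-induced isomorphism $H_2(X;\mathbb{Z}) \cong H_2(Y;\mathbb{Z})$; this forces $H_2(\pi;\mathbb{Z}) \to H_2(\pi/N;\mathbb{Z})$ to be surjective. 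Similarly, the isomorphism on $H_1$ implies $\pi^{\mathrm{ab}} \cong (\pi/N)^{\mathrm{ab}}$ (a surjection between isomorphic finitely generated abelian groups is an isomorphism), so $H_1(\pi) \to H_1(\pi/N)$ is an isomorphism. Plugging these into the five-term exact sequence above forces $N/[\pi,N] = 0$.

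The main obstacle is the assertion, used in the (ii) $\Rightarrow$ (i) direction, that Theorem~\ref{th3} requires no 1-handles when $\alpha$ is surjective --- this depends on inspecting the theorem's proof --- together with the Poincar\'e--Lefschetz duality argument that upgrades the homology isomorphism on $W$ to one on $Y$.
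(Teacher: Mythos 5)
Your proof is correct and takes essentially the same route as the paper: the five-term exact sequence in group homology to verify the hypotheses of Theorem~\ref{th3}, then Poincar\'e--Lefschetz duality with the universal coefficient theorem to transfer the homology isomorphism from $W$ to $Y$, and for the converse the Hopf exact sequence plus the five-term sequence to force $N = [\pi,N]$. The minor variations are harmless: you derive the finite normal generation of $N$ from the finitely many 2-handle attaching circles, whereas the paper invokes its Lemma~\ref{fp} (finite presentability of $\pi/N = \pi_1(Y)$); and you explicitly flag the concern that Theorem~\ref{th3} as stated permits 1-handles while the corollary claims only 2- and 3-handles are used. You are right that this requires inspecting the theorem's proof, and indeed when $\alpha$ is surjective the construction there picks a presentation of $G = \alpha(\pi)$ directly, sets $u = v = 0$, and adds no 1-handles (the set $S$ consists only of normal generators of $\ker\alpha$), so the concern is resolved and matches the corollary's statement.
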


The article is organized as follows. In Section 2, we introduce some basic
facts about finitely $G$-dense rings, surgery theory, Poincar\'{e} duality
with coefficients and one-sided homology cobordism. The main theorem is
proved in Section 3 and some applications are given in Section 4.

\section{Preliminary results and basic facts}

\subsection{Finitely $G$-dense rings}

Recall the concept of finitely $G$-dense rings in Definition \ref{fgdense}
(see also Ye \cite{er}, Definition 1). Compared with the definition of $G$%
-dense rings, we require all the modules in Definition \ref{fgdense} to be
finitely generated. It is clear that a $G$-dense ring is finitely $G$-dense.
The following lemma from Ye \cite{er} gives some typical examples of finitely $%
G$-dense rings.

\begin{lemma}[ \protect Ye \cite{er}, Lemma 2]
Finitely $G$-dense rings include the real reduced group $C^{\ast }$-algebra $%
C_{\mathbb{R}}^{\ast }(G),$ the real group von Neumann algebra $\mathcal{N}_{%
\mathbb{R}}G,$ the real Banach algebra $l_{\mathbb{R}}^{1}(G),$ the rings $k=%
\mathbb{Z}/p$ (prime $p$) and $k\subseteq \mathbb{Q}$ a subring of the
rationals (with trivial $G$-actions), the group ring $k[G]$.
\end{lemma}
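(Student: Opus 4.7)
The plan is to verify Definition \ref{fgdense} case by case for each listed ring, exploiting the remark preceding the lemma that every $G$-dense ring in the sense of \cite{Ye} is automatically finitely $G$-dense: the latter notion is just the restriction of the former to finitely generated modules.

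For the Banach-algebraic examples $R\in\{C^{*}_{\mathbb{R}}(G),\ \mathcal{N}_{\mathbb{R}}G,\ l^{1}_{\mathbb{R}}(G)\}$ I would invoke the full $G$-density proofs of \cite{Ye}. The common strategy is to pick an $R$-basis $e_{1},\dots,e_{n}$ of the free target $F$, lift it to elements $x_{i}\in M\otimes_{\mathbb{Z}[G]}R$ with $f(x_{i})=e_{i}$, and then approximate each $x_{i}$ by an element of $M\otimes 1$ using density of $\mathbb{Z}[G]$ inside $R$ in the appropriate norm or strong-operator topology; a standard open-mapping / perturbation argument ensures that sufficiently close approximations still map to an $R$-basis of $F$. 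For the trivial-$G$-action cases $k=\mathbb{Z}/p$ and $k\subseteq\mathbb{Q}$, the canonical isomorphism $M\otimes_{\mathbb{Z}[G]}k\cong M_{G}\otimes_{\mathbb{Z}}k$ together with the PID structure of $k$ reduces the condition to a concrete statement about surjections between finitely generated $k$-modules; applying Smith normal form to the f.g.\ abelian group $M_{G}$ one extracts elements $\bar m_{1},\dots,\bar m_{n}\in M_{G}$ whose images form a $k$-basis of the free quotient $k^{n}$, and these lift to the required $m_{i}\in M$ with $f(m_{i}\otimes 1)$ a basis of $F$. Finally, for the group ring $k[G]$, the bimodule identification $k[G]\cong k\otimes_{\mathbb{Z}}\mathbb{Z}[G]$ yields $M\otimes_{\mathbb{Z}[G]}k[G]\cong M\otimes_{\mathbb{Z}}k$ as right $k[G]$-modules, and a $k[G]$-equivariant adaptation of the previous argument, which uses the right $G$-action on $M$ to produce suitable lifts inside $M\otimes 1$, gives the required basis.

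The principal obstacle is the Banach-algebraic case, which genuinely needs analytic input: topological density of $\mathbb{Z}[G]$ alone is not enough, and one must combine it with a perturbation result ensuring that an approximate basis remains a basis after a small deformation. This uses either the open mapping theorem (for $l^{1}_{\mathbb{R}}(G)$ and $C^{*}_{\mathbb{R}}(G)$) or a dedicated strong-operator density argument specific to $\mathcal{N}_{\mathbb{R}}G$. In the purely algebraic cases the subtlety instead lies in the fact that $f(M\otimes 1)$ is only an additive (or at best $\mathbb{Z}[G]$-) subgroup of $F$, not an $R$-submodule in general, so one cannot invoke a generic ``generating set contains a basis'' theorem; the PID / Smith-normal-form extraction has to be carried out by hand.
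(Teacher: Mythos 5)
The paper does not supply a proof of this lemma at all: it is imported verbatim from Ye~\cite{er}, Lemma~2, and the only argument offered in the surrounding text is the one-sentence observation that a $G$-dense ring is automatically finitely $G$-dense (since the latter just restricts the former to finitely generated modules). So there is no in-paper proof to measure your sketch against; what you have written is a reconstruction of the argument that the paper outsources.

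As a reconstruction, your outline has the right shape but some of the mechanics are off. For $R=\mathbb{Z}/p$ and for $R=\mathbb{Z}[G]$ the statement is tautological: $\phi:\mathbb{Z}[G]\to R$ is \emph{surjective}, so $M\otimes 1 = M\otimes_{\mathbb{Z}[G]}R$ and $f(M\otimes 1)=F$ automatically contains a basis — no Smith normal form is needed. For $k\subsetneq\mathbb{Q}$ (trivial action) and for $k[G]$ with $\mathbb{Z}\subsetneq k\subseteq\mathbb{Q}$, the efficient argument is not PID extraction but \emph{clearing denominators}: take any $x_i\in M\otimes_{\mathbb{Z}[G]}R$ with $f(x_i)=e_i$, write $x_i=\sum_j m_{ij}\otimes c_{ij}$, and multiply by a common integer $N$ that clears all denominators appearing in the $c_{ij}$. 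Since $N$ is a unit of $k$ (and hence of $k[G]$), the elements $Nx_i\in M\otimes 1$ satisfy $f(Nx_i)=Ne_i$, still an $R$-basis of $F$. This sidesteps the worry you raise about $k[G]$ not being a PID. For the Banach-algebraic cases your approximation idea needs the same correction: $\mathbb{Z}[G]$ is \emph{not} norm-dense in $l^1_{\mathbb{R}}(G)$ or $C^*_{\mathbb{R}}(G)$ ($\mathbb{Z}$ is discrete in $\mathbb{R}$), so one first approximates by elements of $\mathbb{Q}[G]$ (which is norm-dense) and then clears denominators as above before invoking stability of invertibility under small norm perturbation. Finally, for $\mathcal{N}_{\mathbb{R}}G$ the caveat you flag is a genuine gap, not a footnote: $\mathbb{Z}[G]$ (or $\mathbb{Q}[G]$) is only dense in the weak/strong operator topology, and neither the open mapping theorem nor naive norm perturbation applies; one needs an additional argument (e.g.\ passing through the norm-dense subalgebra $C^*_{\mathbb{R}}(G)\subseteq\mathcal{N}_{\mathbb{R}}G$ and using functional calculus to transport invertibility/idempotence) which your sketch leaves unaddressed.
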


Similar to Example 2.6 in Ye \cite{Ye}, one can show that the ring of Gaussian
integers $\mathbb{Z[}i\mathbb{]}$ is not finitely $G$-dense for the trivial
group $G$.

\subsection{Basic facts on surgery}

The proof of Theorem \ref{th3} is based on some facts in surgery theory. The
following definition and lemmas can be found in Ranicki \cite{rnk}.

\begin{definition}
\label{def}Given an $n$-manifold $M$ and an embedding $S^{i}\times
D^{n-i}\subset M$ $(-1\leq i\leq n)$ define the $n$-manifold $M^{\prime
}=(M-S^{i}\times D^{n-i})\cup D^{i+1}\times S^{n-i-1}$ obtained from $M$ by
surgery by an $i$-surgery. The \emph{trace} of the surgery on $S^{i}\times
D^{n-i}\subset M$ is the elementary $(n+1)$-dimensional cobordism $%
(W;M,M^{\prime })$ obtained from $M\times \lbrack 0,1]$ by attaching an $%
(i+1)$-handle $W=M\times \lbrack 0,1]\cup _{S^{i}\times D^{n-i}\times {1}%
}D^{i+1}\times D^{n-i}.$
\end{definition}

The following lemma gives homotopy relations between the surgery trace and
the manifolds.

\begin{lemma}
\label{hom}Let $M,M^{\prime }$ and $W$ be the manifolds defined in
Definition \ref{def}. There are homotopy equivalences
\begin{equation*}
M\cup e^{i+1}\simeq W\simeq M^{\prime }\cup e^{n-i},
\end{equation*}%
where attaching maps are induced by embedding of handles.
\end{lemma}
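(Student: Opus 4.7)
The plan is to prove each homotopy equivalence by exhibiting an explicit strong deformation retraction of $W$ onto the indicated CW complex; this is a classical fact from Morse/handle theory and requires no algebraic input.

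For the first equivalence $M\cup e^{i+1}\simeq W$, recall from Definition~\ref{def} that
\[
W \;=\; M\times[0,1]\;\cup_{S^{i}\times D^{n-i}\times\{1\}}\;D^{i+1}\times D^{n-i}.
\]
The cylinder $M\times[0,1]$ strongly deformation retracts onto $M\times\{1\}$, and the handle $D^{i+1}\times D^{n-i}$ strongly deformation retracts onto its \emph{core disk} $D^{i+1}\times\{0\}$. These two retractions are compatible on the attaching region: both send a point $(x,y,1)$ with $x\in S^{i}$ and $y\in D^{n-i}$ to the image of $(x,0)$, a point which lies in $M\times\{1\}$ via the surgery embedding $S^{i}\hookrightarrow M$. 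Glueing, one obtains a strong deformation retraction of $W$ onto $M\cup_{S^{i}}D^{i+1}=M\cup e^{i+1}$, where the attaching map of $e^{i+1}$ is precisely the embedded surgery sphere $S^{i}\times\{0\}\subset M$.

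For the second equivalence $W\simeq M'\cup e^{n-i}$, the key observation is that $W$ carries a \emph{dual} handle decomposition when viewed from the $M'$ end. Reordering factors as $D^{i+1}\times D^{n-i}=D^{n-i}\times D^{i+1}$ exhibits $(W;M',M)$ as the trace of an $(n-i-1)$-surgery on $M'$ performed along the embedded cocore sphere $\{0\}\times S^{n-i-1}\subset D^{i+1}\times S^{n-i-1}\subset M'$, with attaching tube $S^{n-i-1}\times D^{i+1}$. Applying the argument of the previous paragraph to this dual description, we retract $M'\times[0,1]$ onto $M'\times\{1\}$ and retract the dual handle $D^{n-i}\times D^{i+1}$ onto its core disk $D^{n-i}\times\{0\}=\{0\}\times D^{n-i}$, the cocore of the original handle. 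The result is a strong deformation retraction of $W$ onto $M'\cup e^{n-i}$, with $e^{n-i}$ attached along $\{0\}\times S^{n-i-1}\subset M'$.

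The main technical point to verify is this dual handle description: one must check that $W$, built as $M\times[0,1]$ with a single $(i+1)$-handle, can equivalently be built as $M'\times[0,1]$ with a single $(n-i)$-handle attached as above. This is a standard fact of handle theory and can be established either by choosing a Morse function on $W$ with one interior critical point of index $i+1$ and replacing it by its negative (whose critical point then has index $(n+1)-(i+1)=n-i$), or directly by tracing through collar neighbourhoods of $M$ and $M'$ in $W$. Once this is in hand, the two deformation retractions above yield both homotopy equivalences simultaneously.
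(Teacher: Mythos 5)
The paper itself gives no proof of this lemma (it is cited from Ranicki's textbook), so there is nothing internal to compare against; I can only evaluate your argument on its own merits. Your overall strategy --- deformation-retracting the surgery trace onto a CW model for the first equivalence, and invoking the dual handle decomposition for the second --- is the right one, and is exactly the standard handle-theoretic argument.

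However, your compatibility check for the first retraction is incorrect, and the two maps you describe do not glue. The deformation retraction of $M\times[0,1]$ onto $M\times\{1\}$ restricts to the identity on $M\times\{1\}$; it therefore sends $(x,y,1)$ with $x\in S^i$, $y\in D^{n-i}$ to the point $(x,y)\in S^i\times D^{n-i}\subset M$, not to $(x,0)$ as you assert. The retraction of the handle onto its core disk, by contrast, does send $(x,y)$ to $(x,0)$. These disagree on the gluing region whenever $y\neq 0$, so they do not assemble into a map on $W$. The correct move is to retract the handle $D^{i+1}\times D^{n-i}$ not onto the core disk alone but onto $\bigl(D^{i+1}\times\{0\}\bigr)\cup\bigl(S^i\times D^{n-i}\bigr)$, the union of the core disk and the attaching tube. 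This retraction is the identity on the attaching tube, hence genuinely compatible with the identity on $M\times[0,1]$, and collapses $W$ onto $M\times[0,1]\cup_{S^i\times\{0\}}D^{i+1}$; a second retraction of the cylinder onto $M\times\{1\}$ then produces $M\cup e^{i+1}$. Your dual-handle argument for the second equivalence is conceptually fine, but as written it would inherit the same flaw, so the same correction is needed there.
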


For a manifold $M,$ denote by $w(M)$ the first Stiefel-Whitney class of the
tangent bundle over $M$. The following lemma is Proposition 4.24 in Ranicki \cite%
{rnk}.

\begin{lemma}
\label{lem} Let $(W;M,M^{\prime })$ be the trace of an $n$-surgery on an $m$%
-dimensional manifold $M$ killing $x\in H_{n}(M)$.

1) If $1\leq n\leq m-2$, then $W$ and $M^{\prime }$ has the same orientation
type as $M$ (which means that $M^{\prime }$ is orientable iff $M$ is
orientable).

2) If $n=m-1$ and $M$ is orientable, so are $M^{\prime }$ and $W.$

3) If $n=m-1$ and $M$ is nonorientable, then $M^{\prime }$ is orientable if
and only if $x=w(M)\in H_{m-1}(M;\mathbb{Z}_{2})=H^{1}(M;\mathbb{Z}_{2}).$

4) If $n=0$ and $M$ is nonorientable, then so are $W$ and $M^{\prime }$.
\end{lemma}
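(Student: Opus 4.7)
The orientability of a connected manifold $N$ is detected by its first Stiefel--Whitney class, viewed as a homomorphism $w(N):\pi_1(N)\to\mathbb{Z}/2$, and any boundary component of an orientable manifold inherits an orientation. The plan is to combine these two facts with the handle decomposition $W=(M\times[0,1])\cup_h(D^{n+1}\times D^{m-n})$ and the dual decomposition expressing $M$ in terms of $M'$.

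For the forward half of parts 1 and 2, suppose $M$ is orientable. Then $M\times[0,1]$ carries a product orientation, the handle $D^{n+1}\times D^{m-n}$ is orientable, and because the attaching region $S^n\times D^{m-n}$ is connected (which holds for all $n\geq 0$), the handle orientation can be chosen to match that of $M\times[0,1]$ along the overlap, giving an orientation of $W$; thus $M,M'\subset\partial W$ inherit orientations. Conversely, if $M$ is non-orientable then the inclusion $M\subset\partial W$ immediately forces $W$ non-orientable. For part 1 it remains to show that non-orientability of $M$ forces non-orientability of $M'$ whenever $1\leq n\leq m-2$; this is obtained by applying the forward argument to the dual $(m-n-1)$-surgery from $M'$ to $M$, since the hypothesis $n\leq m-2$ ensures $m-n-1\geq 1$, so the argument applies symmetrically.

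For part 3 the dual is a $0$-surgery and the previous symmetry breaks. Writing $M'=(M\setminus(S^{m-1}\times D^1))\cup(D^m\sqcup D^m)$ and noting that capping by disks does not affect orientability, it suffices to determine when $M_c:=M\setminus(\Sigma\times D^1)$ is orientable, where $\Sigma\cong S^{m-1}$ represents $x\in H_{m-1}(M;\mathbb{Z})$. A loop in $M_c$ lifts to a loop in $M$ disjoint from $\Sigma$, so $M_c$ is orientable if and only if $w(M)$ vanishes on every such class; by mod-$2$ Poincar\'{e} duality on $M$ (available without orientability), this is equivalent to the identity $w(M)=\mathrm{PD}\,[\Sigma]_2$ in $H^1(M;\mathbb{Z}/2)$, i.e.\ $x=w(M)$ under the duality identification used in the statement. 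Part 4 follows directly: a $0$-surgery exhibits $M\setminus(S^0\times D^m)$ as an open codimension-$0$ submanifold of both $W$ and $M'$, and since a manifold containing a non-orientable open subset is itself non-orientable, the non-orientability of $M$ propagates to both $W$ and $M'$.

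The most delicate step is part 3, where the geometric condition of cutting along a hypersurface must be matched with the algebraic identity $x=w(M)$ through mod-$2$ Poincar\'{e}--Lefschetz duality; integral duality is unavailable in the non-orientable setting, so one must work throughout with $\mathbb{Z}/2$ coefficients and verify carefully that the identification $H_{m-1}(M;\mathbb{Z}/2)=H^1(M;\mathbb{Z}/2)$ assumed in the statement is precisely the duality used.
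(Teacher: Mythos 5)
The paper does not prove this lemma; it simply cites it as Proposition 4.24 of Ranicki's \emph{Algebraic and Geometric Surgery}, so there is no in-paper argument to compare against. Your proof is essentially the standard one (handle decomposition, duality of the trace, and mod-$2$ Poincar\'e--Lefschetz duality), and it is correct in outline. Two small points deserve tightening. First, the parenthetical ``$S^{n}\times D^{m-n}$ is connected, which holds for all $n\geq 0$'' is false: for $n=0$ the attaching region is two disjoint disks. This does not harm your argument, since you only invoke connectedness in parts 1 and 2 where $n\geq 1$, and you treat $n=0$ separately in part 4, but the parenthetical should be corrected to $n\geq 1$. Second, in part 3 the duality step should be spelled out a bit more: from the pair sequence $H^{1}(M,M_{c};\mathbb{Z}/2)\to H^{1}(M;\mathbb{Z}/2)\to H^{1}(M_{c};\mathbb{Z}/2)$ together with excision and the Thom isomorphism for the trivial normal bundle of $\Sigma$ one gets $H^{1}(M,M_{c};\mathbb{Z}/2)\cong\mathbb{Z}/2$ mapping onto $\{0,\mathrm{PD}[\Sigma]\}\subset H^{1}(M;\mathbb{Z}/2)$, so ``$w(M)$ restricts to zero on $M_{c}$'' is equivalent to $w(M)\in\{0,\mathrm{PD}[\Sigma]\}$; it is only because $M$ is assumed nonorientable, i.e.\ $w(M)\neq 0$, that this collapses to $w(M)=\mathrm{PD}[\Sigma]$. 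Stating that explicitly would make the ``only if'' direction of part 3 airtight.
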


\subsection{Poincar\'{e} duality with coefficients}

In this subsection, we collect some facts about the Poincar\'{e} duality
with coefficients. For more details, see Chapter 2 of Wall's book \cite{wall}%
. Let $X$ be a finite CW complex with a universal covering space $\tilde{X}.$
Denote by $C_{\ast }(\tilde{X})$ the cellular chain complex of $X$ and by $%
C^{\ast }(\tilde{X})$ the chain complex $\mathrm{Hom}_{\mathbb{Z}\pi
_{1}(X)}(C_{\ast }(\tilde{X}),\mathbb{Z}\pi _{1}(X)).$ We call a finite CW
complex $X$ a simple Poincar\'{e} complex if for some positive integer $n$
and a representative cycle $\xi \in C_{n}(\tilde{X})\otimes_{\mathbb{Z\pi
}_{1}(X)}\mathbb{Z}$, the cap product induces a chain homotopy equivalence%
\begin{equation*}
\xi \cap :C^{\ast }(\tilde{X})\rightarrow C_{n-\ast }(\tilde{X})
\end{equation*}%
and the Whitehead torsion is vanishing. Similarly, we can define Poincar\'{e}
pair $(Y,X)$ by a simple homotopy equivalence%
\begin{equation*}
\xi \cap :C^{\ast }(\tilde{Y})\rightarrow C_{n-\ast }(\tilde{Y},\tilde{X}).
\end{equation*}%
When $X$ is an $n$-dimensional closed manifold, $X$ is a simple Poincar\'{e}
complex of formal dimension $n.$ When $X$ is a compact manifold with
boundary $\partial X$, the pair $(X,\partial X)$ is a simple Poincar\'{e}
pair (cf. Theorem 2.1 on page 23 in Wall \cite{wall}).

\begin{lemma}
\label{poin}Let $M$ be an $n$-dimensional orientable compact manifold with
boundary $\partial M=X\dot{\cup}Y$ for closed manifolds $X$ and $Y.$ Then
for any integer $q\geq 0$ and any $\mathbb{Z}\pi _{1}(M)$-module $R,$ there
is an isomorphism%
\begin{equation*}
H^{q}(M,X;R)\rightarrow H_{n-q}(M,Y;R).
\end{equation*}
\end{lemma}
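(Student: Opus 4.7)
The plan is to derive the asserted isomorphism by promoting the Poincar\'e pair structure of $(M,\partial M)$ to a Lefschetz-style duality for the triad $(M;X,Y)$, via a Five Lemma argument on the long exact sequences of the triple $(\tilde M,\widetilde{\partial M},\tilde X)$.

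First, by the paragraph preceding the lemma (following Wall \cite{wall}), $(M,\partial M)$ is a simple Poincar\'e pair of formal dimension $n$, so there is a fundamental class $\xi \in H_{n}(M,\partial M;\mathbb{Z})$ and cap product with $\xi$ yields a chain homotopy equivalence
$$\xi \cap -\,:\,C^{*}(\tilde M) \longrightarrow C_{n-*}(\tilde M,\widetilde{\partial M})$$
of $\mathbb{Z}\pi_{1}(M)$-chain complexes. Since $\partial M = X\dot{\cup}Y$ is the boundary of an orientable manifold, each of the closed $(n-1)$-manifolds $X,Y$ inherits an orientation and is itself a simple Poincar\'e complex of dimension $n-1$. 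Writing $\tilde X$, $\tilde Y$ for the preimages of $X$, $Y$ in $\tilde M$, we have $\widetilde{\partial M} = \tilde X \sqcup \tilde Y$, and cap product with the boundary piece $[X]$ gives a chain equivalence $C^{*}(\tilde X) \to C_{n-1-*}(\tilde X)$.

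Second, I would check that at the chain level $\xi\cap -$ restricts to a map $C^{*}(\tilde M,\tilde X) \to C_{n-*}(\tilde M,\tilde Y)$. This uses the splitting $\partial\xi = [X] + [Y]$ in $C_{n-1}(\widetilde{\partial M})$ together with the cap-product Leibniz rule $\partial(\xi\cap c) = \partial\xi\cap c \pm \xi\cap\delta c$: for a relative cocycle $c \in C^{*}(\tilde M,\tilde X)$ the term $[X]\cap c$ vanishes, so $\partial(\xi\cap c)$ lies in $C_{*}(\tilde Y)$. These maps then fit into a commutative ladder whose top row is the cohomology long exact sequence of $(\tilde M,\tilde X)$ and whose bottom row is the homology long exact sequence of the triple $(\tilde M,\widetilde{\partial M},\tilde X)$, after the excision identification $H_{*}(\widetilde{\partial M},\tilde X) \cong H_{*}(\tilde Y)$. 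In that ladder, the vertical arrows $H^{q}(\tilde M) \to H_{n-q}(\tilde M,\widetilde{\partial M})$ and $H^{q}(\tilde X) \to H_{n-1-q}(\tilde X)$ are isomorphisms by the first step, so the Five Lemma shows that $H^{q}(\tilde M,\tilde X) \to H_{n-q}(\tilde M,\tilde Y)$ is an isomorphism for every $q$. Tensoring the underlying $\mathbb{Z}\pi_{1}(M)$-chain homotopy equivalence with $R$ then yields the stated isomorphism $H^{q}(M,X;R) \cong H_{n-q}(M,Y;R)$.

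The main obstacle is the bookkeeping in the chain-level compatibility step: verifying that $\xi\cap -$ respects the relative filtration and tracking the signs in $\partial\xi = [X] + [Y]$ and in the Leibniz formula for cap product. Once that is secured, the Five Lemma step is mechanical, and the passage from $\mathbb{Z}\pi_{1}(M)$-coefficients to an arbitrary $\mathbb{Z}\pi_{1}(M)$-module $R$ is formal, since cap product with $\xi$ is a chain homotopy equivalence of complexes of finitely generated free $\mathbb{Z}\pi_{1}(M)$-modules and chain homotopies are preserved by tensoring.
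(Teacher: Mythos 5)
Your proof follows essentially the same route as the paper: compare a cohomology long exact sequence with a homology long exact sequence via cap product with a fundamental class and apply the five lemma, using Poincar\'e duality for $X$ and Lefschetz duality for the pair $(M,\partial M)$ as the two families of known vertical isomorphisms. The paper phrases the homology side as the long exact sequence of the cofiber sequence $(X,\emptyset)\to(M,Y)\to(M,X\dot{\cup}Y)$, i.e.\ the triple $(M,\partial M,Y)$, and points to Hatcher's Theorem~3.43 for the commutative case; your chain-level discussion of $\partial\xi=[X]+[Y]$, the Leibniz rule, and the transfer to $R$-coefficients by tensoring the $\mathbb{Z}\pi_1(M)$-chain equivalence is a slightly more explicit account of the same mechanism.

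One small slip: you take the triple $(\tilde M,\widetilde{\partial M},\tilde X)$ with the excision $H_*(\widetilde{\partial M},\tilde X)\cong H_*(\tilde Y)$, but that long exact sequence contains $H_*(\tilde M,\tilde X)$, not the $H_*(\tilde M,\tilde Y)$ you need, and it would force your boundary vertical arrows to be $H^q(\tilde X)\to H_{n-1-q}(\tilde Y)$, which is not a duality isomorphism. You should use the triple $(\tilde M,\widetilde{\partial M},\tilde Y)$ with $H_*(\widetilde{\partial M},\tilde Y)\cong H_*(\tilde X)$, so the bottom row reads
\begin{equation*}
\cdots \to H_{n-q}(\tilde X)\to H_{n-q}(\tilde M,\tilde Y)\to H_{n-q}(\tilde M,\widetilde{\partial M})\to H_{n-q-1}(\tilde X)\to\cdots
\end{equation*}
and sits under the cohomology sequence of $(\tilde M,\tilde X)$ as intended; with this correction the five lemma goes through and your argument matches the paper's.
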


\begin{proof}
Since $X$ is a Poincar\'{e} complex and $(M,X\dot{\cup}Y)$ is a Poincar\'{e}
pair, the lemma can be proved by considering the long exact homology and
cohomology sequences for the cofiber sequence of pairs%
\begin{equation*}
(X,\emptyset )\rightarrow (M,Y)\rightarrow (M,X\dot{\cup}Y)
\end{equation*}%
using Poincar\'{e} duality for $X$ and the pair $(M,X\dot{\cup}Y).$ When $R$
is commutative, this is Theorem 3.43 of Hatcher's textbook \cite{hat}. The
proof of general case is similar.
\end{proof}

\subsection{One-sided $R$-homology cobordism}

Recall from Guilbault and Tinsley \cite{gt} that a \textit{one-sided h-cobordism} $(W;X,Y)$ is a
compact cobordism between closed manifolds such that $Y\hookrightarrow W$ is
a homotopy equivalence. Motivated by this, we can define one-sided homology
cobordism.

\begin{definition}
Let $(W;X,Y)$ be a compact cobordism between closed manifolds and $R$ a $%
\mathbb{Z}\pi _{1}(W)$-module. We call $(W;X,Y)$ a \emph{one-sided }$R$\emph{%
-homology cobordism} if the inclusion $Y\hookrightarrow W$ induces $\pi
_{1}(Y)\overset{\cong }{\rightarrow }\pi _{1}(W)$ and, for any integer $%
q\geq 0$ an isomorphism $H_{q}(Y;R)\cong H_{q}(W;R).$
\end{definition}

The following are some easy facts.

\begin{enumerate}
\item[(1)] When $R=\mathbb{Z[}\pi _{1}(W)],$ one-sided $R$-homology
cobordism is the same as one-sided $h$-cobordism.
\end{enumerate}

\begin{proof}
By the Whitehead theorem, the homotopy equivalence follows from the homology
equivalence with coefficients $\mathbb{Z[}\pi _{1}(W)]$ and the isomorphism
of fundamental groups.
\end{proof}

\begin{enumerate}
\item[(2)] Let $(W;X,Y)$ be a one-sided $R$-homology cobordism. For any
integer $q\geq 0,$ the inclusion map induces an isomorphism $H^{q}(W;R)\cong
H^{q}(X;R).$
\end{enumerate}

\begin{proof}
This follows directly from Poincar\'{e} duality with coefficients as in the
previous subsection.
\end{proof}

\begin{enumerate}
\item[(3)] For a one-sided $h$-cobordism $(W;X,Y),$ the inclusion map $%
X\hookrightarrow W$ is a Quillen's plus construction.
\end{enumerate}

\begin{proof}
Since $Y\hookrightarrow W$ is a homotopy equivalence, we have that for any
integer $q\geq 0,$ the relative cohomology group $H^{q}(W,Y;\mathbb{Z[}\pi
_{1}(W)])=0.$ By Poincar\'{e} duality, the inclusion map $X\hookrightarrow W$
induces homology isomorphism with coefficients $\mathbb{Z[}\pi _{1}(W)].$
According to 4.3 xi in Berrick \cite{berr}, the inclusion map is then a Quillen's
plus construction.
\end{proof}

\begin{enumerate}
\item[(4)] Let $R$ be a principal ideal domain and $(W;X,Y)$ a one-sided $R$%
-homology cobordism. Then for any integer $q\geq 0,$ there is an isomorphism
$H_{q}(X;R)\cong H_{q}(Y;R).$
\end{enumerate}

\begin{proof}
When the inclusion map $Y\hookrightarrow W$ induces an $R$-homology
equivalence, it also induces an $R$-cohomology equivalence by the universal
coefficients theorem. By Poincar\'{e} duality, $X$ has the same homology as $%
W,$ also as $Y.$
\end{proof}

\section{Proof of Theorem \protect\ref{th3}}

In this section, we will prove Theorem \ref{th3}. First, we need some facts
about finitely presented groups.

Recall that a normal subgroup $N$ of a group $\pi $ is called \emph{normally
finitely generated} if there exists a finite set $S\subset N$ such that $N$
is generated by elements of the form $gsg^{-1}$ for $s\in S$ and $g\in \pi .$
The following lemma gives an elementary characterization of when a normal
subgroup is normally finitely generated. Since it is helpful for our later
argument, we present a short proof here.

\begin{lemma}
\label{fp}Let $\pi $ be a finitely presented group and $N$ a normal
subgroup. Then $N$ is normally finitely generated if and only if $\pi /N$ is
finitely presented.
\end{lemma}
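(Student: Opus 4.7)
The plan is to handle the two directions of the equivalence separately, with the forward implication being a presentation bookkeeping argument and the reverse implication reducing to a standard finiteness property of finitely presented groups.

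For the forward direction, I would start with a finite presentation $\pi =\langle x_{1},\ldots ,x_{n}\mid r_{1},\ldots ,r_{m}\rangle $ together with a finite normal generating set $s_{1},\ldots ,s_{k}$ for $N$, each $s_{i}$ lifted to a word in the free group $F=F(x_{1},\ldots ,x_{n})$. Then $\pi /N$ is presented by $\langle x_{1},\ldots ,x_{n}\mid r_{1},\ldots ,r_{m},s_{1},\ldots ,s_{k}\rangle $, which is visibly finite; the verification is immediate from the universal property of group presentations.

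For the reverse direction, write $\pi =F/\langle\langle r_{1},\ldots ,r_{m}\rangle\rangle $ with $F$ finitely generated free on a set $X$, and let $\widetilde{N}$ be the full preimage of $N$ in $F$. Then $\pi /N=F/\widetilde{N}$, and $\widetilde{N}$ is the normal closure in $F$ of $\{r_{1},\ldots ,r_{m}\}\cup \widetilde{N}_{0}$, where $\widetilde{N}_{0}$ is any set of lifts of $N$. The key step is the standard fact that if a finitely presented group $G$ admits a presentation $\langle X\mid T\rangle $ with $X$ finite but $T$ possibly infinite, then some finite subset $T'\subseteq T$ already normally generates the kernel. Granted this, a finite subset $\{r_{i_{1}},\ldots ,r_{i_{p}},\tilde{n}_{1},\ldots ,\tilde{n}_{q}\}$ normally generates $\widetilde{N}$ in $F$; pushing down to $\pi $ the $r_{i_{j}}$ become trivial, so the images $n_{1},\ldots ,n_{q}$ normally generate $N$.

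The main obstacle is justifying the general fact above. I would proceed by fixing an auxiliary finite presentation $G=\langle Y\mid S\rangle $, writing each $y\in Y$ as a word $w_{y}$ in $X$ and each $x\in X$ as a word $u_{x}$ in $Y$, and forming the finite set $T_{0}\subset F(X)$ consisting of the words $s(w_{y})$ for $s\in S$ together with $x\cdot u_{x}(w_{y})^{-1}$ for $x\in X$. Every element of $T_{0}$ lies in $\langle\langle T\rangle\rangle $, so each can be written using only finitely many conjugates of finitely many elements of $T$; collecting these yields the desired finite $T'\subseteq T$. To finish I must verify $\langle\langle T'\rangle\rangle =\langle\langle T\rangle\rangle $, which I would do by constructing mutually inverse surjections between $F(X)/\langle\langle T'\rangle\rangle $ and $G$ using the Tietze-style substitutions encoded in $T_{0}$. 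This identification is the most delicate piece of bookkeeping, but it is a classical technique and I expect the paper's proof to follow essentially the same outline.
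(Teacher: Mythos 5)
Your proof is correct and takes essentially the same route as the paper: both directions reduce to the standard fact that a finitely presented group cannot need infinitely many relators over a finite generating set (equivalently, that finite presentability is independent of the choice of finite generating set). The paper simply cites this fact (Prop.\ 1.3 in Ohshika) to conclude that the preimage $f^{-1}(N)$ in the free group is normally finitely generated, then pushes forward along $f$, exactly as you do with your set $\widetilde N$; the only real difference is that you spell out the Tietze-transformation proof of the cited lemma, which the paper treats as known.
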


\begin{proof}
The necessity of the condition is obvious. Conversely, choose a presentation
of $\pi /N$ with finitely many generators and finitely many relations. Let $%
F_{n}$ be the free group with $n$ generators and $f:F_{n}\rightarrow \pi $ a
surjection, with normally finitely generated kernel $K.$ We can also assume
that that the generators of $F_{n}$ are mapped surjectively to the
generators of $\pi /N$ by the composition of $f$ with the quotient map $\pi
\rightarrow \pi /N.$ Here we use the fact that the condition that a group is
finitely presented does not depend on the choice of a generator system (cf.
Prop. 1.3 in Ohshika \cite{dg}). Since $\pi /N$ is finitely presented, $f^{-1}(N)$
is normally finitely generated. Then $N=f(f^{-1}(N))$ is normally finitely
generated.
\end{proof}

In order to prove Theorem \ref{th3}, we use the following lemma, which is a
more general version of Hopf's exact sequence.

\begin{lemma}[Lemma 2.2 in \protect Higson-Roe-Schick \cite{hig}]
\label{lem1}Let $G$ be a group and $V$ be a left $\mathbb{Z}[G]$-module. For
any CW complex $X$ with fundamental group $G$ and universal covering space $%
\tilde{X}$, there is an exact sequence%
\begin{equation*}
H_{2}(\tilde{X})\bigotimes_{\mathbb{Z}[G]}V\rightarrow
H_{2}(X;V)\rightarrow H_{2}(G;V)\rightarrow 0.
\end{equation*}
\end{lemma}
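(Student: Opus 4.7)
The plan is to reduce the statement to the long exact sequence of a pair $(Y,X)$, where $Y$ is obtained from $X$ by killing higher homotopy groups with cells of dimension at least three, so that $Y$ is a model of $BG = K(G,1)$. Concretely, I would attach $3$-cells to $X$ to kill $\pi_2(X)$, then $4$-cells to kill the resulting $\pi_3$, and so on; this process gives a CW inclusion $X \hookrightarrow Y$ that induces an isomorphism on $\pi_1$ and such that every cell of $Y\setminus X$ has dimension $\geq 3$. In particular, $H_*(Y;V) = H_*(G;V)$ by definition.

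Next I would analyze the relative cellular chain complex $C_*(\tilde Y,\tilde X)$, which is a complex of free $\mathbb{Z}[G]$-modules concentrated in degrees $\geq 3$. The key observation is that its homology in degree $3$ is easy to compute: the long exact sequence of the pair $(\tilde Y,\tilde X)$ together with $H_2(\tilde Y)=H_3(\tilde Y)=0$ (because $\tilde Y$ is contractible) gives
\begin{equation*}
H_3(\tilde Y,\tilde X) \;\cong\; H_2(\tilde X)
\end{equation*}
as $\mathbb{Z}[G]$-modules. Now, since $C_*(\tilde Y,\tilde X)$ vanishes in degrees $\leq 2$, the relative homology $H_*(Y,X;V) = H_*(C_*(\tilde Y,\tilde X)\otimes_{\mathbb{Z}[G]} V)$ vanishes for $* \leq 2$; and because $C_3(\tilde Y,\tilde X)$ is the lowest nonzero term, the third homology is simply a cokernel,
\begin{equation*}
H_3(Y,X;V) \;=\; \mathrm{coker}\bigl(C_4(\tilde Y,\tilde X)\otimes V \to C_3(\tilde Y,\tilde X)\otimes V\bigr).
\end{equation*}
By right exactness of $(-)\otimes_{\mathbb{Z}[G]} V$ applied to the presentation of $H_2(\tilde X)$ as this same cokernel over $\mathbb{Z}[G]$, I obtain a natural isomorphism
\begin{equation*}
H_3(Y,X;V) \;\cong\; H_2(\tilde X)\otimes_{\mathbb{Z}[G]} V.
\end{equation*}

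Finally, I would feed these computations into the long exact sequence of the pair $(Y,X)$ with coefficients in $V$:
\begin{equation*}
H_3(Y,X;V)\to H_2(X;V)\to H_2(Y;V)\to H_2(Y,X;V),
\end{equation*}
substitute $H_2(Y,X;V)=0$, $H_2(Y;V)=H_2(G;V)$, and the identification of $H_3(Y,X;V)$ above to extract the desired three-term exact sequence. The only real subtlety is the cokernel identification; once one notes that tensor product is right exact, it is essentially bookkeeping, so I do not expect a serious obstacle. The mild technical point worth flagging is that $X$ may not be finite-dimensional, so one must be careful that attaching cells in all higher dimensions still yields a well-defined $K(G,1)$ and a well-defined long exact sequence, but this is standard.
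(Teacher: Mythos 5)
The paper does not prove this lemma; it simply quotes it as Lemma~2.2 of Higson--Roe--Schick, so there is no in-paper proof to compare against. Your argument is correct, and it is the standard proof. Attaching cells of dimension~$\geq 3$ to $X$ produces an inclusion $X\hookrightarrow Y$ into a $K(G,1)$ with $C_*(\tilde Y,\tilde X)$ a free $\mathbb{Z}[G]$-chain complex concentrated in degrees~$\geq 3$; hence $H_q(Y,X;V)=0$ for $q\leq 2$, and since $C_2=0$ the bottom relative homology is a cokernel, $H_3(\tilde Y,\tilde X)=\operatorname{coker}\partial_4$, which is isomorphic to $H_2(\tilde X)$ by contractibility of $\tilde Y$. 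Right exactness of $-\otimes_{\mathbb{Z}[G]}V$ then gives $H_3(Y,X;V)\cong H_2(\tilde X)\otimes_{\mathbb{Z}[G]}V$, and the long exact sequence of the pair with $V$-coefficients collapses to the desired three-term sequence. All of these steps check out, and the worry you flag about $X$ being infinite-dimensional is indeed harmless (the cell-attachment construction of a $K(G,1)$ and the long exact sequence both go through without finiteness hypotheses). The only small thing worth adding for completeness is a one-line verification that the composite
\begin{equation*}
H_2(\tilde X)\otimes_{\mathbb{Z}[G]}V \;\cong\; H_3(Y,X;V)\;\xrightarrow{\;\partial\;}\;H_2(X;V)
\end{equation*}
agrees with the natural map induced on chain level by a cycle representative in $C_2(\tilde X)\otimes V$; this is routine from unwinding the connecting homomorphism of the pair, and while the lemma as stated does not name the maps, the naturality is what gets used when the lemma is applied inside Theorem~\ref{th3}.
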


\noindent \textbf{Proof of Theorem \ref{th3}} We construct a manifold $Y_{1}$
whose fundamental group $\pi _{1}(Y)=G$ as follows. Fix a finite
presentation
\begin{equation*}
\langle x_{1},x_{2},\ldots ,x_{k}|y_{1},y_{2},\ldots ,y_{l}\rangle
\end{equation*}%
of $\alpha (\pi ).$ Extend the presentation of $\alpha (\pi )$ by generators
and relations to yield a presentation
\begin{equation*}
\langle x_{1},x_{2},\ldots ,x_{k},g_{1},g_{2},\ldots
,g_{u}|y_{1},y_{2},\ldots ,y_{l},r_{1},r_{2},\ldots ,r_{v}\rangle
\end{equation*}%
of $G$ by adding some generators and relations. For adding the generators $%
g_{1},g_{2},\ldots ,g_{u}$, let $S_{i}^{0}$ be a copy of the $0$-sphere $%
S^{0}$ and
\begin{equation*}
f_{1}:\amalg _{i=1}^{u}S_{i}^{0}\times D^{n}\rightarrow X
\end{equation*}%
be an embedding with disjoint image. Add $1$-handles along $f_{1}$ to $X.$
The resulting manifold is $X_{1}$ and denote by $W_{1}$ the surgery trace.
We see that the manifold $X_{1}$ is actually the connected sum $X\sharp
_{i=1}^{u}S^{1}\times S^{n-1}$ and can have the same orientation type as $X.$
Denote by $e_{i}^{j}$ a copy of $j$-cells indexed by $i$. By Lemma \ref{hom}%
, there are homotopy equivalences%
\begin{equation*}
X\cup _{i=1}^{u}e_{i}^{1}\simeq W_{1}\simeq X_{1}\cup _{i=1}^{u}e_{i}^{n}.
\end{equation*}%
According to the construction, the fundamental group of $X_{1}$ is
\begin{equation*}
\pi _{1}(X_{1})=\pi \ast F(g_{1},g_{2},\ldots ,g_{u}),
\end{equation*}%
the free product of $\pi =\pi _{1}(X)$ and the free group of $u$ generators.
By Lemma \ref{fp}, the kernel $\ker \alpha $ is normally generated by
finitely many elements $z_{1},z_{2},\ldots ,z_{p}$. Denote by $S$ the finite
set $\{z_{1},z_{2},\ldots ,z_{p},r_{1},r_{2},\ldots ,r_{v}\}.$ Choose as
usual a contractible open set $U$ in $X_{1}$ as "base point". According to
Whitney's theorem, any element in $\pi _{1}(X_{1})$ is represented by an
embedded $1$-sphere. Since the manifold $X_{1}$ is orientable, the normal
bundle of any embedded $1$-sphere is trivial. For the elements in $S$, let $%
S_{\lambda }^{1}$ be a copy of the $1$-sphere $S^{1}$ and let%
\begin{equation*}
f_{2}:\amalg _{\lambda \in S}S_{\lambda }^{1}\times D^{n-1}\rightarrow X_{1}
\end{equation*}%
be disjoint embeddings representing the corresponding elements in $\pi
_{1}(X_{1}).$ Do surgery along $f_{2}$ by attaching $2$-handles. The
resulting manifold is $X_{2},$ and denote by $W_{2}$ the surgery trace. By
Lemma \ref{hom} once again, there are homotopy equivalences%
\begin{equation*}
X_{1}\cup _{\lambda \in S}e_{\lambda }^{2}\simeq W_{2}\simeq X_{2}\cup
_{\lambda \in S}e_{\lambda }^{n-1}.
\end{equation*}%
Since $n\geq 4,$ the fundamental group of $X_{2}$ is $G.$ Let $W^{\prime }$
be the manifold obtained by gluing the two traces $W_{1}$ and $W_{2}$
together along $X_{1}.$ There are homotopy equivalences%
\begin{eqnarray*}
W^{\prime } &\simeq &X\cup _{i=1}^{u}e_{i}^{1}\cup _{\lambda \in
S}e_{\lambda }^{2}\simeq X_{1}\cup _{i=1}^{u}e_{i}^{n}\cup _{\lambda \in
S}e_{\lambda }^{2} \\
&\simeq &X_{2}\cup _{i=1}^{u}e_{i}^{n}\cup _{\lambda \in S}e_{\lambda
}^{n-1}.
\end{eqnarray*}%
This implies the fundamental group of $W^{\prime }$ is also $G,$ since $n>3.$

We consider the homology groups of the pair $(W^{\prime },X).$ Let $\tilde{X}
$ and $\tilde{W}^{\prime }$ be the universal covering spaces of $X$ and $%
W^{\prime }.$ By Lemma \ref{lem1}, there is a commutative diagram
\begin{equation*}
\begin{array}{cccc}
H_{2}(\tilde{X})\bigotimes\nolimits_{\mathbb{Z}G}R & \rightarrow & H_{2}(%
\tilde{W}^{\prime })\bigotimes\nolimits_{\mathbb{Z}G}R &  \\
\downarrow &  & \downarrow j_{4} &  \\
H_{2}(X;R) & \overset{j_{2}}{\longrightarrow } & H_{2}(W^{\prime };R) &
\overset{j_{1}}{\rightarrow }H_{2}(W^{\prime },X;R)\rightarrow
H_{1}(X^{\prime };R)\rightarrow H_{1}(W^{\prime };R) \\
\downarrow j_{3} &  & \downarrow j_{5} &  \\
H_{2}(\pi ;R) & \overset{\alpha _{\ast }}{\longrightarrow } & H_{2}(G;R) &
\end{array}%
\end{equation*}%
where the middle horizontal chain is the long exact sequence of homology
groups for the pair $(W^{\prime },X)$ and the two vertical lines are the
Hopf exact sequences as in Lemma \ref{lem1}. Notice that
\begin{equation*}
H_{1}(X;R)\cong H_{1}(\pi ;R)\rightarrow H_{1}(W^{\prime };R)\cong H_{1}(G;R)
\end{equation*}%
is injective by assumption. This implies $j_{1}:H_{2}(W^{\prime
};R)\rightarrow H_{2}(W^{\prime },X;R)$ is surjective in the above diagram.
Note that the map $\alpha _{\ast }:H_{2}(\pi ;R)\rightarrow H_{2}(G;R)$ is
surjective by assumption. By a diagram chase (for more details, see the
proof of Theorem 1.1 in Ye \cite{Ye}), there is a surjection
\begin{equation*}
j_{1}\circ j_{4}:H_{2}(\tilde{W}^{\prime })\bigotimes\nolimits_{\mathbb{Z}%
G}R\rightarrow H_{2}(W^{\prime },X;R).
\end{equation*}%
As $\tilde{W}^{\prime }$ is simply connected, the homology group $H_{2}(%
\tilde{W}^{\prime })$ is isomorphic to $\pi _{2}(W^{\prime })\cong \pi
_{2}(X_{2}).$ Notice that the homology group $H_{2}(W^{\prime },X;R)$ can be
taken to be a finitely generated free $R$-module as in the proof of Theorem
1 in Ye \cite{Ye}. Since the ring $R$ is a finitely $G$-dense ring in the sense
of Definition \ref{fgdense}, we can find a finite set $S^{\prime }$ of
elements in $\pi _{2}(X_{2})$ such that the image $j_{1}\circ
j_{4}(S^{\prime })$ forms an $R$-basis for $H_{2}(W^{\prime },X;R).$ Then
there are maps $b_{\lambda }:S_{\lambda }^{2}\rightarrow X_{2}$ with $%
\lambda \in S^{\prime }$ such that for all $q\geq 2,$ the composition of maps%
\begin{equation*}
H_{q}(\vee _{\lambda \in S^{\prime }}S_{\lambda }^{2};R)\rightarrow
H_{q}(W^{\prime };R)\rightarrow H_{q}(W^{\prime },X;R)
\end{equation*}%
is an isomorphism$.$

We construct the manifolds $Y$ and $W$ as follows. Notice that an embedded $%
2 $-sphere in a $k$-dimensional $(k\geq 5)$ orientable manifold $M$ has
trivial normal bundle if and only if it represents $0$ in $\pi _{1}(SO(k))=%
\mathbb{Z/}2$ through the classifying map $M\rightarrow BSO(k)$ (cf. page 45
of Milnor \cite{mi}). When $2$ is invertible in the ring $R,$ we can always choose
the $2$-spheres in $S^{\prime }$ to have trivial normal bundles. When $2$ is
not invertible in $R,$ the manifold $X$ is a spin manifold by assumption.
This implies any embedded $2$-sphere has a trivial normal bundle. Since $%
n\geq 5,$ we can choose a map
\begin{equation*}
f_{3}:\amalg _{\lambda \in S^{\prime }}S_{\lambda }^{2}\times
D^{n-2}\rightarrow X_{2}
\end{equation*}%
as disjoint embedding, whose components represent the elements $b_{\lambda }$%
. Do surgery along $f_{3}$ by attaching $3$-handles. Let $Y$ denote the
resulting manifold and $W_{3}$ denote the surgery trace. Suppose that $W$ is
the manifold obtained by gluing $W^{\prime }$ and $W_{2}$ along $X_{2},$
which is a cobordism between $X$ and $Y.$ By Lemma \ref{hom}, there are
homotopy equivalences%
\begin{equation*}
X_{2}\cup _{\lambda \in S^{\prime }}e_{\lambda }^{3}\simeq W_{3}\simeq Y\cup
_{\lambda \in S^{\prime }}e_{\lambda }^{n-2}
\end{equation*}%
and
\begin{equation*}
W\simeq W^{\prime }\cup _{\lambda \in S^{\prime }}e_{\lambda }^{3}\simeq
Y\cup _{i=1}^{u}e_{i}^{n}\cup _{\lambda \in S}e_{\lambda }^{n-1}\cup
_{\lambda \in S^{\prime }}e_{\lambda }^{n-2}.
\end{equation*}%
By the van Kampen theorem, the fundamental group of $Y$ is still $G,$ since $%
n>4.$ Denoting by $H_{\ast }(-)$ the homology groups $H_{\ast }(-;R),$ we
have the following commutative diagram:%
\begin{equation*}
\begin{array}{ccccccc}
\cdots \rightarrow H_{3}(\vee D^{3},\vee S^{2}) & \rightarrow & H_{2}(\vee
S^{2},\mathrm{pt}) & \rightarrow & H_{2}(\vee D^{3},\mathrm{pt}) &
\rightarrow & H_{2}(\vee D^{3},\vee S^{2}) \\
\downarrow &  & \downarrow &  & \downarrow &  & \downarrow \\
\cdots \rightarrow H_{3}(W,W^{\prime }) & \rightarrow & H_{2}(W^{\prime },X)
& \rightarrow & H_{2}(W,X) & \rightarrow & H_{2}(W,W^{\prime }).%
\end{array}%
\end{equation*}%
By a five lemma argument and the assumption that $\alpha _{\ast }:H_{1}(\pi
;R)\rightarrow H_{1}(G;R)$ is injective, for any integer $q\geq 2,$ the
relative homology group $H_{q}(W,X;R)=0.$ This shows for any integer $q\geq
2,$ the homology groups $H_{q}(X;R)\cong H_{q}(W;R)$ and proves the
isomorphism in (1).

\begin{remark}
\label{spin}From the proof, we can see that for some special group
homomorphism $\alpha $ and coefficients $R,$ the orientability or spin-ness
of $X$ in Theorem \ref{th3} can be dropped. For example, when $\alpha $ is
surjective and $\ker (\alpha )<[\pi ,\pi ]$, we do not need $X$ to
orientable. When $\ker (\alpha )$ is perfect (or weakly $L$-perfect for some
normal group), the spin-ness of $X$ can be dropped (cf. the proof of Theorem
4.1 and Theorem 5.2 in Guilbault and Tinsley \cite{gt}).
\end{remark}

\section{Applications}

In this section, we give several applications of Theorem \ref{th3}.

\subsection{Surgery plus construction for manifolds}

In this subsection, we get a manifold version of Quillen's plus construction
by doing surgery. The following proposition is a special case of Theorem \ref%
{th3} when $\pi $ has a normally finitely generated normal perfect subgroup $%
P$, $G=\pi /P$ and $R=\mathbb{Z[}G].$

\begin{corollary}
\label{plus}Let $n\geq 5$ be an integer and $M$ a closed $n$-dimensional
spin manifold. Suppose that $P$ is a normal perfect subgroup in $\pi _{1}(M)$
normally generated by some finite elements. Then there exists a one-sided
h-cobordism $(W;M,Y)$ such that $\pi _{1}(W)=\pi _{1}(M)/P$. More precisely,
there exists a closed spin manifold $Y$ with the following properties:

\begin{enumerate}
\item[(i)] $Y$ is obtained from $M$ by attaching 2-handles and 3-handles,
such that

\item[(ii)] $\pi _{1}(Y)=\pi _{1}(M)/P$ and the inclusion map $%
g:M\rightarrow W,$ the cobordism between $X$ and $Y$, is Quillen's plus
construction inducing epimorphism $\pi _{1}(M)\rightarrow \pi _{1}(M)/P$ of
fundamental groups; and

\item[(iii)] $Y$ has the homotopy type of Quillen's plus construction $%
M^{+}. $
\end{enumerate}
\end{corollary}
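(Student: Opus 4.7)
The plan is to apply Theorem~\ref{th3} with the surjective quotient homomorphism $\alpha \colon \pi_1(M) \twoheadrightarrow \pi_1(M)/P =: G$ and the coefficient ring $R = \mathbb{Z}[G]$. Since $\pi_1(M)$ is finitely presented and $P$ is normally generated by finitely many elements, Lemma~\ref{fp} shows $G$ is finitely presented. The group ring $\mathbb{Z}[G]$ is finitely $G$-dense (it appears in the list of examples after Definition~\ref{fgdense}), and $\alpha(\pi_1(M)) = G$ is finitely presented. Because $\alpha$ is surjective, the generator extension in the first stage of the proof of Theorem~\ref{th3} is trivial ($u = 0$), so no $1$-handles are attached; this will yield conclusion (i).

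The main algebraic step is to verify the homology hypotheses. Identifying $H_\ast(\pi_1(M); \mathbb{Z}[G])$ with the integral homology of the regular $P$-cover of $B\pi_1(M)$, which is a $K(P,1)$, perfectness of $P$ yields
\begin{equation*}
H_1(\pi_1(M); \mathbb{Z}[G]) \cong H_1(P; \mathbb{Z}) = P/[P,P] = 0.
\end{equation*}
On the other hand $H_i(G; \mathbb{Z}[G]) = 0$ for every $i \geq 1$ since $\mathbb{Z}[G]$ is a free $\mathbb{Z}[G]$-module. Hence $H_1(\alpha)$ is injective and $H_2(\alpha)$ is surjective for trivial reasons. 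The long exact sequence of the pair $(G, \pi_1(M))$ with coefficients in $\mathbb{Z}[G]$ then forces $H_1(G, \pi_1(M); \mathbb{Z}[G]) = 0$, since the two $H_0$ terms are both $\mathbb{Z}$ joined by an isomorphism; in particular this group is (trivially) stably free, so the alternative hypothesis of Theorem~\ref{th3} is satisfied.

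Applying Theorem~\ref{th3} produces a cobordism $(W; M, Y)$ built from $M$ by attaching only $2$- and $3$-handles, with $\pi_1(W) = \pi_1(Y) = G$ and with $g_\ast \colon H_q(M; \mathbb{Z}[G]) \xrightarrow{\cong} H_q(W; \mathbb{Z}[G])$ for every $q \geq 2$. Because the degree $0$ and $1$ isomorphisms are automatic from the computations above, one gets $H_q(W, M; \mathbb{Z}[G]) = 0$ for every $q$. By the Berrick criterion recalled in item (3) of Section~2.4, this makes $M \hookrightarrow W$ a Quillen plus construction with respect to $P$, which is (ii). To obtain (iii), apply Poincar\'{e} duality in the form of Lemma~\ref{poin}: the vanishing of $H_\ast(W, M; \mathbb{Z}[G])$ translates into the vanishing of $H^\ast(W, Y; \mathbb{Z}[G])$, which combined with the isomorphism $\pi_1(Y) \xrightarrow{\cong} \pi_1(W)$ makes $Y \hookrightarrow W$ a $\mathbb{Z}[G]$-homology equivalence, hence a homotopy equivalence by Whitehead's theorem. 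Therefore $(W; M, Y)$ is a one-sided $h$-cobordism and $Y \simeq W \simeq M^{+}$.

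The main technical point to be careful about is that $Y$ must itself be spin. The spin hypothesis on $M$ is already used inside the proof of Theorem~\ref{th3} to guarantee that the embedded $2$-spheres carrying the $3$-handle attachments have trivial normal bundles, and one can further arrange the framings of the $2$- and $3$-handles to be compatible with the spin structure on $M$. The resulting trace $W$ then carries a spin structure extending the given one on $M$, and its restriction to the other boundary component equips $Y$ with a spin structure.
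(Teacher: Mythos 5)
Your proposal is correct and follows the same overall strategy as the paper: apply Theorem~\ref{th3} with $R = \mathbb{Z}[G]$ and the quotient map $\alpha$, invoke Berrick's criterion to identify $M\hookrightarrow W$ as a plus construction, and use Poincar\'{e} duality plus Whitehead to show $Y\hookrightarrow W$ is a homotopy equivalence. The one place you diverge is in verifying the homology hypotheses of Theorem~\ref{th3}: the paper runs the Hilton--Stammbach five-term exact sequence and uses $\mathbb{Z}[G]\otimes_{\mathbb{Z}[G]} P_{\mathrm{ab}}=0$ to read off that $H_1(\alpha)$ is an isomorphism and $H_2(\alpha)$ is onto, whereas you compute both sides outright, identifying $H_*(\pi_1(M);\mathbb{Z}[G])$ with $H_*(P;\mathbb{Z})$ via the $P$-cover and observing $H_{\geq 1}(G;\mathbb{Z}[G])=0$ since $\mathbb{Z}[G]$ is free. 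Your computation makes the injectivity and surjectivity both hold for the stronger reason that the groups in question are zero, and it also gives you $H_1(G,\pi_1(M);\mathbb{Z}[G])=0$ for free, which cleanly discharges the stable-freeness hypothesis of Theorem~\ref{th3} — a point the paper's proof leaves implicit. Your closing remark on the spin structure of $Y$ is also a worthwhile addition that the paper's proof does not spell out.
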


\begin{proof}
We apply Theorem \ref{th3}. Let $X=M$ and $\alpha :\pi =\pi
_{1}(M)\rightarrow \pi /P$ be the quotient map. By Hilton and Stammbach \cite{h}, there is an
exact sequence%
\begin{eqnarray*}
H_{2}(\pi ;\mathbb{Z}[\pi /P]) &\rightarrow &H_{2}(\pi /P;\mathbb{Z}[\pi
/P])\rightarrow \mathbb{Z}[\pi /P]\bigotimes\nolimits_{\mathbb{Z}[\pi
/P]}P_{\mathrm{ab}} \\
&\rightarrow &H_{1}(\pi ;\mathbb{Z}[\pi /P])\rightarrow H_{1}(\pi /P;\mathbb{%
Z}[\pi /P])\rightarrow 0.
\end{eqnarray*}%
When $\mathbb{Z}[\pi /P]\bigotimes_{\mathbb{Z}[\pi /P]}P_{\mathrm{ab}}\cong
\mathbb{Z}\bigotimes_{\mathbb{Z}}P_{\mathrm{ab}}=0$, we can see
\begin{equation*}
H_{2}(\pi ;\mathbb{Z}[\pi /P])\rightarrow H_{2}(\pi /P;\mathbb{Z}[\pi /P])
\end{equation*}%
is surjective and $H_{1}(\pi ;\mathbb{Z}[\pi /P])\rightarrow H_{1}(\pi /P;%
\mathbb{Z}[\pi /P])$ is an isomorphism. Therefore, the conditions of group
homomorphism $\alpha $ are satisfied. By Theorem \ref{th3}, there exists a
closed spin manifold $Y$ and cobordism $(W;M,Y)$ such that for any integer $%
q\geq 0,$ there is an isomorphism
\begin{equation*}
H_{q}(M;\mathbb{Z[}G])\cong H_{q}(W;\mathbb{Z[}G]).
\end{equation*}%
This implies the inclusion map $g:X\rightarrow W$ is the Quillen plus
construction (cf. 4.3 xi in Berrick \cite{berr}). Therefore, for all integers $q,$
the relative cohomology groups $H^{q}(W,X;\mathbb{Z[}G])=0.$ According to
the Poincar\'{e} duality in Lemma \ref{poin}, for each integer $q\geq 0,$
the relative homology group $H_{q}(W,Y;\mathbb{Z[}G])=0$ and there is an
isomorphism%
\begin{equation*}
H_{q}(Y;\mathbb{Z[}G])\cong H_{q}(W;\mathbb{Z[}G])
\end{equation*}%
as well. This means the universal covering spaces of $Y$ and $W$ are
homology equivalent and therefore also homotopy equivalent. Since $Y$ and $W$
have the same fundamental group, this implies the inclusion map $%
Y\rightarrow W$ is a homotopy equivalence. This finishes the proof.
\end{proof}

Corollary \ref{plus} was first obtained by Hausmann \cite{ha} (see also
Hausmann \cite{ha2} and Guilbault-Tinsley \cite{gt2, gt}).

Recall the definition of (mod $L$)-one-sided $h$-cobordism by Guilbault and
Tinsley \cite{gt}. We see that a (mod $L$)-one-sided $h$-cobordism is a
one-sided $\mathbb{Z}[\pi _{1}(Y)/L]$-homology cobordism. The following
result proved by Guilbault and Tinsley \cite{gt} is a corollary of Theorem %
\ref{th3} when $R=\mathbb{Z}[\pi _{1}(Y)/L].$ Note that $\mathbb{Z}[\pi
_{1}(Y)/L]$ is a finitely $\pi _{1}(Y)$-dense ring.

\begin{corollary}[\protect Guilbault-Tinsley \cite{gt}, Theorem 5.2 ]
\label{cgt}Let $B$ be a closed $n$-manifold $(n\geq 5)$ and $\alpha :\pi
_{1}(B)\rightarrow G$ a surjective homomorphism onto a finitely presented
group such that $\ker (\alpha )$ is strongly $L^{\prime }$-perfect, \textsl{%
i.e.} $\ker (\alpha )=[\ker (\alpha ),L^{\prime }]$ for some group $%
L^{\prime },$ where $\ker (\alpha )\trianglelefteq L^{\prime
}\trianglelefteq \pi _{1}(B)$ and all loops representing elements in $%
L^{\prime }$ are orientation-preserving. Then for $L=L^{\prime }/\ker
(\alpha ),$ there exists a (mod $L$)-one-sided h-cobordism $(W;B,A)$ such
that $\pi _{1}(W)=G$ and $\ker (\pi _{1}(B)\rightarrow \pi _{1}(W))=\ker
(\alpha ).$
\end{corollary}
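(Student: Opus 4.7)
The plan is to apply Theorem \ref{th3} with $X=B$, $\alpha$ the given homomorphism, and coefficients the finitely $\pi_1(B)$-dense ring $R=\mathbb{Z}[G/L]=\mathbb{Z}[\pi_1(B)/L']$. The core of the argument is to verify the two hypotheses on $H_1(\alpha)$ and $H_2(\alpha)$ by means of the Stallings--Hilton--Stammbach five-term exact sequence (as in Hilton--Stammbach \cite{h}) for the extension
\begin{equation*}
1\to\ker\alpha\to\pi_1(B)\to G\to 1
\end{equation*}
with coefficients in $R$:
\begin{equation*}
H_2(\pi_1(B);R)\xrightarrow{H_2(\alpha)}H_2(G;R)\to H_0\bigl(G;H_1(\ker\alpha;R)\bigr)\to H_1(\pi_1(B);R)\xrightarrow{H_1(\alpha)}H_1(G;R)\to 0.
\end{equation*}
Because $\ker\alpha$ acts trivially on $R$, universal coefficients identifies $H_1(\ker\alpha;R)$ with $(\ker\alpha)_{\mathrm{ab}}\otimes_{\mathbb{Z}}R$, and taking $G$-coinvariants of this (using that $\mathbb{Z}[G/L]$ is induced from the trivial $L$-module) reduces the middle term to $\ker\alpha/[L',\ker\alpha]$. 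The strongly $L'$-perfect hypothesis $\ker\alpha=[\ker\alpha,L']$ makes this vanish, so $H_1(\alpha)$ is an isomorphism and $H_2(\alpha)$ is surjective.

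The remaining hypotheses of Theorem \ref{th3} are easy to check. The group $\alpha(\pi_1(B))=G$ is finitely presented by assumption, and a diagram chase in the long exact sequence of the pair $(BG,B\pi_1(B))$ shows $H_1(G,\pi_1(B);R)=0$ (since $H_1(\alpha)$ is an iso and both $H_0$ terms collapse to $\mathbb{Z}$), verifying the stably-free condition trivially. The spin assumption is unnecessary here by Remark \ref{spin}, since the strongly $L'$-perfect hypothesis is exactly a situation covered there. Theorem \ref{th3} then produces a closed $R$-orientable manifold $A$ and a cobordism $(W;B,A)$ built from $B$ by attaching $1$-, $2$-, and $3$-handles, with $\pi_1(W)=\pi_1(A)=G$, $\ker\bigl(\pi_1(B)\to\pi_1(W)\bigr)=\ker\alpha$, and $H_q(B;R)\xrightarrow{\cong}H_q(W;R)$ for all $q\geq 2$ (and likewise for $q=0,1$ by what was shown above).

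To finish, I would upgrade the $R$-homology equivalence $B\hookrightarrow W$ into one on the $A$-side by Poincar\'e--Lefschetz duality (Lemma \ref{poin}). The relative chain complex $C_*(\widetilde W,\widetilde B;\mathbb{Z}[G])$ is a finitely generated free $\mathbb{Z}[G]$-complex concentrated in degrees $1$--$3$ whose tensor with $R$ is acyclic; being a bounded acyclic complex of free $R$-modules, it is contractible, and its $R$-dual (which computes $H^*(W,B;R)$ by the tensor--hom adjunction) is therefore acyclic as well. Applying Lemma \ref{poin} yields $H_*(W,A;R)=0$, so the inclusion $A\hookrightarrow W$ induces an $R$-homology equivalence, and combined with $\pi_1(A)\xrightarrow{\cong}\pi_1(W)=G$, $(W;B,A)$ is the desired (mod $L$)-one-sided $h$-cobordism.

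The main obstacle I anticipate is the orientation subtlety in the last step: Lemma \ref{poin} as stated requires $W$ to be honestly orientable, whereas only $R$-orientability is at hand. The hypothesis that loops in $L'$ are orientation-preserving means precisely that the orientation character $w_1:\pi_1(B)\to\mathbb{Z}/2$ factors through $G/L$, and using this I would construct a $\mathbb{Z}[\pi_1(W)]$-equivariant isomorphism $R\otimes_{\mathbb{Z}}\mathbb{Z}^{w_1}\cong R$, converting the twisted Poincar\'e duality into the untwisted form required above.
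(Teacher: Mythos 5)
Your proposal is correct and follows essentially the same route as the paper: apply the five-term exact sequence with coefficients $R=\mathbb{Z}[G/L]$, use the strongly $L'$-perfect hypothesis to kill the middle term $\mathbb{Z}[G/L]\otimes_{\mathbb{Z}[G]}(\ker\alpha)_{\mathrm{ab}}\cong(\ker\alpha)/[L',\ker\alpha]$, apply Theorem~\ref{th3} together with Remark~\ref{spin}, and then push the $R$-homology equivalence across to the $A$-side by Poincar\'e duality. The only points where you go beyond the paper are welcome: you make explicit the vanishing of $H_1(G,\pi;R)$ needed for the stably-free hypothesis, you give the tensor--hom/contractibility argument for passing from $H_*(W,B;R)=0$ to $H^*(W,B;R)=0$ where the paper only gestures at the $G/L$-covering, and you flag the orientation-character issue (that $w_1$ factors through $G/L$) that the paper's appeal to Lemma~\ref{poin} silently assumes.
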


\begin{proof}
The proof is similar to that of Corollary \ref{plus}. When $\ker (\alpha
^{\prime })$ is $L^{\prime }$-perfect, we have that $\mathbb{Z}%
[G/L]\bigotimes_{\mathbb{Z[}G\mathbb{]}}\ker (\alpha )_{\mathrm{ab}}=0.$
According to the 5-term exact sequence for group homology (cf. $(8.2)$ in Hilton and Stammbach
\cite{h}, page 202)%
\begin{gather*}
H_{2}(\pi _{1}(B);\mathbb{Z}[G/L])\overset{H_{2}(\alpha )}{\rightarrow }%
H_{2}(G;\mathbb{Z}[G/L])\rightarrow \mathbb{Z}[G/L]\bigotimes\nolimits_{\mathbb{Z}[G]}\ker (\alpha )_{\mathrm{ab}} \\
\rightarrow H_{1}(\pi _{1}(B);k[G/L])\overset{H_{1}(\alpha )}{\rightarrow }%
H_{1}(G;k[G/L])\rightarrow 0,
\end{gather*}%
we can see that $H_{2}(\alpha )$ is surjective and $H_{1}(\alpha )$ is
isomorphic. By Theorem \ref{th3} with $R=\mathbb{Z}[G/L]$ and the remark
followed, there exists a cobordism $(W;A,B)$ such that $\pi _{1}(B)=\pi
_{1}(W)=G$ and the inclusion $B\hookrightarrow W$ induces homology
equivalence with coefficients $R$. Considering the covering spaces of $B$
and $W$ with deck transformation group $G/L,$ we can see that the inclusion $%
B\hookrightarrow W$ also induces a cohomology equivalence with coefficients $%
R$. By Poincar\'{e} duality in Lemma \ref{poin}, the inclusion $%
A\hookrightarrow W$ induces homology equivalence with coefficients $R$. This
finishes the proof.
\end{proof}

\subsection{Surgery preserving integral homology groups}

In this subsection, we study the case when the integral homology groups of a
manifold are preserved by doing surgery. Corollary \ref{integral} is a
special case of Theorem \ref{th3} when $R=\mathbb{Z}$.

\begin{proof}[Proof of Corollary \protect\ref{integral}]
We show that (ii) implies (i) first. By Hilton and Stammbach \cite{h}, there is an exact sequence%
\begin{equation}
H_{2}(\pi ;\mathbb{Z})\rightarrow H_{2}(\pi /N;\mathbb{Z})\rightarrow N/[\pi
,N]\rightarrow H_{1}(\pi ;\mathbb{Z})\rightarrow H_{1}(\pi /N;\mathbb{Z}%
)\rightarrow 0.  \tag{3}
\end{equation}%
When $N=[\pi ,N]$, we have that the map $H_{2}(\pi ;\mathbb{Z})\rightarrow
H_{2}(\pi /N)$ is surjective and $H_{1}(\pi ;\mathbb{Z})\rightarrow
H_{1}(\pi /N)$ is an isomorphism. According to Theorem \ref{th3} with $R=%
\mathbb{Z}$, there exists a closed spin manifold $Y$ obtained from $X$ by
adding $2$-handles and $3$-handles with $\pi _{1}(Y)=\pi _{1}(X)/N.$ Let $W$
be the cobordism between $X$ and $Y.$ Furthermore, we have for any $q\geq 0$
there is an isomorphism $H_{q}(X;\mathbb{Z})\cong H_{q}(W;\mathbb{Z}).$
According to the universal coefficients theorem, for all integers $q\geq 0$
the relative cohomology groups $H^{q}(W,X;\mathbb{Z})=0.$ Therefore by
Theorem \ref{th3} (ii), for any integer $q\geq 0$ there is an isomorphism%
\begin{equation*}
H_{q}(Y;\mathbb{Z})\cong H_{q}(X;\mathbb{Z}).
\end{equation*}

Conversely, suppose $(W;X,Y)$ is a cobordism with the boundary $X$ and $Y.$
Since $Y$ is obtained from $X$ by doing surgery below the middle dimension,
we have isomorphisms $H_{1}(Y;\mathbb{Z})\cong H_{1}(W;\mathbb{Z})$ and $%
H_{2}(Y;\mathbb{Z})\cong H_{2}(W;\mathbb{Z})\cong H_{2}(X;\mathbb{Z})$.
Therefore the inclusion map $X\rightarrow W$ induces an isomorphism
\begin{equation*}
H_{1}(\pi ;\mathbb{Z})=H_{1}(X;\mathbb{Z})\cong H_{1}(W;\mathbb{Z}%
)=H_{1}(\pi _{1}(W);\mathbb{Z}).
\end{equation*}%
According to the Hopf exact sequence (cf. Lemma \ref{lem1}), there is a
commutative diagram%
\begin{equation*}
\begin{array}{ccc}
H_{2}(X;\mathbb{Z}) & \twoheadrightarrow & H_{2}(\pi ;\mathbb{Z}) \\
\downarrow &  & \downarrow \\
H_{2}(W;\mathbb{Z}) & \twoheadrightarrow & H_{2}(\pi _{1}(Y);\mathbb{Z})%
\end{array}%
\end{equation*}%
where the left vertical map is an isomorphism. This shows that the right
vertical map is an epimorphism. According to the same exact sequence (3)
above, we have $N=[\pi ,N],$ which means $N$ is relative perfect. Since $\pi
_{1}(X)/N=\pi _{1}(Y)$ is finitely presented, $N$ is normally finitely
generated by Lemma \ref{fp}.
\end{proof}

Corollary \ref{integral} is a manifold version of a result obtained by Rodr%
\'{\i}guez and Scevenels \cite{rs} for CW complexes.

\subsection{The fundamental groups of homology manifolds}

In this subsection, we study the fundamental groups of manifolds with the
same homology type as a $2$-connected manifold.

\begin{theorem}
\label{fund}Let $G$ be a finitely presented group, $R$ a subring of the
rationals or the constant ring $\mathbb{Z}/p$ (prime $p$) and $n\geq 5$ an
integer. Suppose $M$ is a $2$-connected manifold of dimension $n$. Then the
following are equivalent:

\begin{enumerate}
\item[(i)] There exists an $n$-dimensional manifold $Y$ obtained from $M$ by
adding $1$-handles, $2$-handles and $3$-handles with $\pi _{1}(Y)=G$ and for
any integer $q\geq 0,$ we have%
\begin{equation*}
H_{q}(Y;R)\cong H_{q}(M;R);
\end{equation*}

\item[(ii)] The group $G$ is $R$-superperfect, i.e. $H_{1}(G;R)=0$ and $%
H_{2}(G;R)=0.$
\end{enumerate}
\end{theorem}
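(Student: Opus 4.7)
The plan is to derive both directions from general facts about $2$-connected manifolds, using Hopf's sequence for the easy direction and Theorem \ref{th3} for the converse.

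For (i) $\Rightarrow$ (ii): Since $M$ is $2$-connected, Hurewicz together with the universal coefficient theorem gives $H_1(M;R) = H_2(M;R) = 0$, so by hypothesis $H_1(Y;R) = H_2(Y;R) = 0$. Because $R$ carries the trivial $G$-action, one has $H_1(Y;R) \cong G^{\mathrm{ab}} \otimes R \cong H_1(G;R)$, hence $H_1(G;R) = 0$. The exact sequence of Lemma \ref{lem1} presents $H_2(G;R)$ as a quotient of $H_2(Y;R) = 0$, so $H_2(G;R) = 0$.

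For (ii) $\Rightarrow$ (i): I apply Theorem \ref{th3} with $X = M$, $\pi = \pi_1(M) = 1$, and $\alpha \colon 1 \to G$ the trivial homomorphism. The maps $H_1(\alpha)$ and $H_2(\alpha)$ go between zero groups by $R$-superperfectness, so the injectivity and surjectivity hypotheses hold vacuously. The ring $R$ is a PID (a subring of $\mathbb{Q}$ or the field $\mathbb{Z}/p$), and the spin condition is automatic: $M$ being $2$-connected forces $H^2(M;\mathbb{Z}/2) = 0$ by UCT, so $w_2(M) = 0$. Theorem \ref{th3} therefore supplies a closed manifold $Y$ built from $M$ by $1$-, $2$-, and $3$-handles with $\pi_1(Y) = G$, together with a cobordism $(W;M,Y)$ satisfying $H_q(M;R) \cong H_q(W;R)$ for $q \geq 2$; the cases $q = 0,1$ follow from $\pi_1(W) = G$ and $H_1(G;R) = 0$, so $M \hookrightarrow W$ is an $R$-homology equivalence.

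The remaining task is to transfer this equivalence to the other boundary, that is, to show $H_q(Y;R) \cong H_q(W;R)$ for all $q$. By Poincar\'e duality with coefficients (Lemma \ref{poin}), this reduces to proving $H^{*}(W,M;R) = 0$. The relative cellular chain complex $C_{*}(W,M) \otimes R$ is a bounded acyclic complex of finitely generated free $R$-modules, and since $R$ is a PID the universal coefficient theorem for cohomology yields $H^{*}(W,M;R) = 0$ at once. Hence $H_{*}(W,Y;R) = 0$ and $H_q(Y;R) \cong H_q(W;R) \cong H_q(M;R)$ for every $q$. The main (and only mild) obstacle is precisely this bridging step across the cobordism, which requires upgrading $R$-homology vanishing to $R$-cohomology vanishing before invoking Poincar\'e duality.
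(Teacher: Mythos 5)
Your proof is correct and follows essentially the same route as the paper: both apply Theorem \ref{th3} with $\pi = 1$ and $\alpha\colon 1 \to G$ (using that a $2$-connected $M$ is spin and that $R$ is a PID), and both transfer the $R$-homology equivalence $M \hookrightarrow W$ across the cobordism by upgrading to an $R$-cohomology equivalence via the universal coefficient theorem and then invoking Poincar\'e duality (Lemma \ref{poin}). The only cosmetic difference is that you spell out the spin verification ($w_2(M) = 0$ from $H^2(M;\mathbb{Z}/2) = 0$) and the UCT step for cohomology, where the paper compresses these into a sentence and cites its ``easy fact'' (4) from Section 2.4.
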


\begin{proof}
We show that (i) implies (ii) first. By assumptions, we have $%
H_{2}(M;R)=H_{1}(M;R)=0.$ If some manifold $Y$ has the homology groups with
coefficients $R$ as $M,$ then $H_{1}(G;R)\cong H_{1}(M;R)=0.$ According to
the Hopf exact sequence in Lemma \ref{lem1}, we get $H_{2}(G;R)=0.$

Conversely, suppose that $G$ is a finitely presented group with $%
H_{2}(G;R)=H_{1}(G;R)=0.$ Let $X=M,$ $\pi =1,$ the trivial group and $f:\pi
\rightarrow G$ the obvious group homomorphism. Note that the $2$-connected
manifold $X$ is always a spin manifold and $R$ is a principal ideal domain.
According to Theorem\textbf{\ }\ref{th3}\textbf{, }we get a manifold $Y$
with $\pi _{1}(Y)=G$ such that for any integer $q\geq 0,$ there is an
isomorphism $H_{q}(W;R)\cong H_{q}(M;R)$ for the cobordism $W.$ According to
the universal coefficient theorem, for all integers $q\geq 0$ the relative
cohomology groups $H^{q}(W,M;R)=0.$ Therefore, for any integer $q\geq 0,$
there is an isomorphism $H_{q}(Y;R)\cong H_{q}(M;R)$ by Theorem \ref{th3}%
\textbf{\ }(ii).
\end{proof}

\bigskip

Recall that an $n$-dimensional $R$-homology sphere is an $n$-dimensional
manifold $Y$ such that $H_{i}(Y;R)=H_{i}(S^{n};R)$ for any integer $i\geq 0.$
The first part of the following result proved by Kervaire in \cite{kar} is a
special case of Theorem \ref{fund} when $M=S^{n}$ and $R=\mathbb{Z}$.

\begin{corollary}
\label{ker}Let $G$ be a finitely presented group and $n\geq 5$ be an
integer. Then there exists an $n$-dimensional homology sphere $Y$ with $\pi
_{1}(Y)=G$ if and only if $G$ is superperfect, i.e. $H_{1}(G;\mathbb{Z})=0$
and $H_{2}(G;\mathbb{Z})=0.$ Moreover, such manifolds can taken to be in the
same cobordism class as $S^{n}$.
\end{corollary}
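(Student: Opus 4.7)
The plan is to deduce Corollary \ref{ker} from Theorem \ref{fund} essentially by specialization to $M = S^n$ and $R = \mathbb{Z}$. Note that $\mathbb{Z}$ is a subring of $\mathbb{Q}$ (so it is a PID allowed in the hypotheses of Theorem \ref{fund}) and $S^n$ is trivially $2$-connected for $n \geq 5$, so the two external hypotheses are met. However, Theorem \ref{fund}(i) is phrased for manifolds obtained from $M$ by attaching $1,2,3$-handles, so one direction of the corollary requires a direct argument rather than a quotation of Theorem \ref{fund}.

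For the ``only if'' direction, suppose $Y$ is an $n$-dimensional homology sphere with $\pi_1(Y) = G$. Since $H_1(Y;\mathbb{Z}) = 0$, the Hurewicz map shows the abelianization $G^{\mathrm{ab}} = H_1(G;\mathbb{Z})$ vanishes. For $H_2(G;\mathbb{Z})$, I would apply the Hopf exact sequence (Lemma \ref{lem1}) with $V = \mathbb{Z}$ to $Y$, giving a surjection $H_2(Y;\mathbb{Z}) \twoheadrightarrow H_2(G;\mathbb{Z})$; since $H_2(Y;\mathbb{Z}) = H_2(S^n;\mathbb{Z}) = 0$ (as $n \geq 5 \geq 3$), we conclude $H_2(G;\mathbb{Z}) = 0$. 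Thus $G$ is superperfect.

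For the ``if'' direction, given a superperfect finitely presented $G$, apply the (ii)$\Rightarrow$(i) implication of Theorem \ref{fund} with $M = S^n$ and $R = \mathbb{Z}$: this produces a closed $n$-manifold $Y$, obtained from $S^n$ by attaching $1$-, $2$-, and $3$-handles, with $\pi_1(Y) = G$ and $H_q(Y;\mathbb{Z}) \cong H_q(S^n;\mathbb{Z})$ for every $q \geq 0$; by definition $Y$ is an $n$-dimensional homology sphere with fundamental group $G$.

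Finally, for the cobordism assertion: the construction in Theorem \ref{th3} (which underlies Theorem \ref{fund}) delivers, as a byproduct, a cobordism $W$ between $S^n$ and $Y$ given by the trace of the successive handle attachments. This $W$ realizes $Y$ and $S^n$ as boundaries of the same compact manifold, so $Y$ is cobordant to $S^n$. I do not anticipate any serious obstacle here; the only mildly delicate point is recognizing that the ``only if'' direction should be argued from scratch via Hopf's sequence rather than by invoking Theorem \ref{fund}(i), because an arbitrary homology sphere is not a priori built from $S^n$ by $1,2,3$-handle attachments.
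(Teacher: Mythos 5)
Your proposal is correct and matches the paper's approach: the paper likewise treats Corollary \ref{ker} as the specialization of Theorem \ref{fund} to $M=S^n$, $R=\mathbb{Z}$, and the paper's own proof of Theorem \ref{fund}(i)$\Rightarrow$(ii) (via $H_1(Y)\twoheadrightarrow G^{\mathrm{ab}}$ and the Hopf sequence of Lemma \ref{lem1}) is exactly the direct argument you give for the ``only if'' direction, so your care about the handle-attachment wording in Theorem \ref{fund}(i) is well-placed but leads to the same computation. The cobordism assertion is read off from the trace $W$ constructed in Theorem \ref{th3}, just as you say.
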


Hausmann and Weinberger \cite{hw} constructed a superperfect group $G$ for
which any $4$-manifold $Y$ with $\pi _{1}(Y)=G$ satisfies $\chi (Y)>2.$ As a
consequence it follows that Theorem \ref{fund} and Corollary \ref{ker} do
not extend to dimension four.

\subsection{The fundamental groups of higher-dimensional knots}

In this subsection, we study the fundamental groups of higher-dimensional
knots. The following result proved by Kervaire \cite{ker} is a corollary of
Theorem \ref{th3}. (Recall the definition of weight $w(G)$ from the
Introduction).

\begin{corollary}
\label{knot}Given an integer $n\geq 3,$ a finitely presentable group $G$ is
isomorphic to $\pi _{1}(S^{n+2}-f(S^{n}))$ for some differential embedding $%
f:S^{n}\rightarrow S^{n+2}$ if and only if the first homology group $H_{1}(G;%
\mathbb{Z)=Z}$, the weight of $G$ is $1$ and the second homology group $%
H_{2}(G;\mathbb{Z})=0.$
\end{corollary}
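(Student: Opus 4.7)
The proof verifies both implications.

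For \emph{necessity}, let $K = S^{n+2} \setminus f(S^n)$. I would first apply Alexander duality to compute $\tilde H_i(K;\mathbb{Z}) \cong \tilde H^{n+1-i}(S^n;\mathbb{Z})$, giving $H_1(K;\mathbb{Z}) = \mathbb{Z}$ and $H_i(K;\mathbb{Z}) = 0$ for $2 \le i \le n$, which yields both $H_1(G;\mathbb{Z}) = G^{\mathrm{ab}} = \mathbb{Z}$ and, via the Hopf surjection of Lemma \ref{lem1}, $H_2(G;\mathbb{Z}) = 0$. For the weight condition, a van Kampen argument on $S^{n+2} = \nu(f(S^n)) \cup K$ with $\nu(f(S^n)) \simeq S^n$ simply connected (for $n \ge 2$) shows $\pi_1(K)/\langle\langle m \rangle\rangle = 1$, where $m$ is the meridian, so $w(G) = 1$.

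For \emph{sufficiency}, I would apply Theorem \ref{th3} with $X = S^1 \times S^{n+1}$ (closed, orientable, spin, of dimension $n+2 \ge 5$), $\pi = \pi_1(X) = \mathbb{Z}$, $R = \mathbb{Z}$, and $\alpha : \mathbb{Z} \to G$ sending the generator to a chosen weight-$1$ normal generator $a \in G$. The hypotheses are straightforward to check: the image $\langle a \rangle$ is cyclic (hence finitely presented); since any normal generator of $G$ descends to a generator of $G^{\mathrm{ab}} \cong \mathbb{Z}$, the map $H_1(\alpha) : \mathbb{Z} \to \mathbb{Z}$ is an isomorphism, and $H_2(\alpha)$ is trivially surjective since both groups vanish. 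The theorem then produces a closed orientable $(n+2)$-manifold $Y$ with $\pi_1(Y) = G$ such that (arguing as in Theorem \ref{fund} and Corollary \ref{integral}, via universal coefficients and the Poincar\'e--Lefschetz duality of Lemma \ref{poin}) $Y$ has the integral homology of $S^1 \times S^{n+1}$.

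To extract the knot, I would represent $a \in \pi_1(Y)$ by a smoothly embedded circle $\gamma \subset Y$ with trivial normal bundle $\nu(\gamma) \cong S^1 \times D^{n+1}$, and perform the $1$-surgery
\[
Z = (Y \setminus \nu(\gamma)) \cup_{S^1 \times S^n} (D^2 \times S^n).
\]
A van Kampen argument shows $\pi_1(Y \setminus \nu(\gamma)) = G$ and then $\pi_1(Z) = G/\langle\langle a \rangle\rangle = 1$, while a Mayer--Vietoris calculation gives $H_*(Z;\mathbb{Z}) = H_*(S^{n+2};\mathbb{Z})$. Thus $Z$ is a simply connected homology sphere of dimension $\ge 5$, hence a homotopy sphere, and the core $S^n \subset D^2 \times S^n \subset Z$ is a smooth embedding whose complement $Y \setminus \nu(\gamma)$ has fundamental group $G$. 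The \emph{main obstacle} is the final smooth identification $Z \cong S^{n+2}$: the generalized Poincar\'e conjecture supplies only a homeomorphism. I would resolve this by connect-summing $Z$, in a ball disjoint from the embedded $S^n$, with a homotopy sphere $\Sigma$ for which $Z \# \Sigma$ is diffeomorphic to $S^{n+2}$; the knot then sits in the standard smooth $S^{n+2}$ with the complement's fundamental group unchanged.
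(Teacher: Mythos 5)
Your proof is correct, and both directions follow essentially the same strategy as the paper's: necessity via Alexander duality, the Hopf exact sequence of Lemma~\ref{lem1}, and a meridian/van Kampen argument for the weight; sufficiency by applying Theorem~\ref{th3} to produce a closed orientable $(n+2)$-manifold $Y$ with $\pi_1(Y)=G$ and the integral homology of $S^1\times S^{n+1}$, then surgering a circle representing a weight-one normal generator to obtain a simply connected homology sphere in which the dual $n$-sphere is the desired knot. The one structural difference is the input to Theorem~\ref{th3}: the paper takes $X=S^{n+2}$, $\pi=1$, and $\alpha$ trivial, so that the hypotheses of Theorem~\ref{th3} reduce to $H_2(G;\mathbb{Z})=0$, with $H_1(G;\mathbb{Z})=\mathbb{Z}$ and weight one invoked only afterward to show the surgered manifold is a homotopy sphere; you take $X=S^1\times S^{n+1}$, $\pi=\mathbb{Z}$, $\alpha(1)=a$, and use $H_1(G;\mathbb{Z})=\mathbb{Z}$ together with weight one up front to check that $H_1(\alpha)$ is an isomorphism. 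Both choices work (the paper's construction is already on a connected sum of copies of $S^1\times S^{n+1}$ after the $1$-handles), so this is a mild variant rather than a fundamentally different route. Where you are more careful than the paper is the final identification with $S^{n+2}$: the paper asserts that the simply connected homology $(n+2)$-sphere $M$ obtained from the surgery \emph{is} the $(n+2)$-sphere ``by the solution of the higher-dimensional Poincar\'e conjecture,'' but in the smooth category this gives only a homeomorphism, not a diffeomorphism to the standard $S^{n+2}$, while the corollary concerns differential embeddings into the genuine $S^{n+2}$. Your fix --- connect-summing with the inverse homotopy sphere in a ball disjoint from the embedded $S^n$, which changes neither the knot's tubular neighbourhood nor the fundamental group of its complement --- is Kervaire's original remedy and does fill a genuine gap in the paper's exposition.
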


\begin{proof}
Suppose for a finitely presentable group $G$, we have $H_{1}(G;\mathbb{Z)=Z}$%
, $H_{2}(G;\mathbb{Z})=0$ and the weight $w(G)=1.$ Let $X=S^{n+2},$ $\pi =1,$
the trivial group, and $\alpha :\pi \rightarrow G$ the trivial group
homomorphism. Notice that $\alpha $ induces an injection of the first
homology groups and surjection of the second homology groups. By Theorem \ref%
{th3} with $R=\mathbb{Z}$, we get a closed manifold $Y$ with $\pi _{1}(Y)=G,$
obtained from $S^{n+2}$ by attaching $1$ -handles, $2$-handles and $3$%
-handles. Suppose that $W$ is the surgery trace. By Poincar\'{e} duality,
for any integer $q\leq n+1$ the relative cohomology group $H^{q}(W,Y)=0.$
According to the universal coefficient theorem, we have that for each
integer $2\leq i\leq n+1$ there is an isomorphism%
\begin{equation*}
H_{i}(Y)\cong H_{i}(X)=0.
\end{equation*}%
Let $\gamma \in G$ be an element such that $G$ is normally generated by $%
\gamma $ and $\varphi :S^{1}\rightarrow Y$ be a differential embedding
representing $\gamma .$ Extend $\varphi $ to be an embedding $\varphi
^{\prime }:S^{1}\times D^{n+1}\rightarrow Y.$ Do surgery to $Y$ along $%
\varphi ^{\prime }$ to get a manifold $M.$ It can be easily seen that $M$ is
simply connected and for each integer $1\leq i\leq n$ the homology group $%
H_{i}(M)=0.$ Therefore, $M$ is a $(n+2)$-sphere by the solution of
higher-dimensional Poincar\'{e} conjecture (note that all manifolds are
assumed to be smooth). Let $\phi :D^{2}\times S^{n}\rightarrow M$ be the
embedding and choose $f=\phi (0,-)$ to be the embedding $S^{n}\rightarrow M.$
It can be directly checked that
\begin{eqnarray*}
\pi _{1}(M-f(S^{n})) &\cong &\pi _{1}(M-\phi (D^{2}\times S^{n})) \\
&\cong &\pi _{1}(Y-\varphi ^{\prime }(S^{1}\times D^{n+1}))\cong \pi
_{1}(Y)=G.
\end{eqnarray*}%
This finishes the "if" part.

Conversely, suppose $f:S^{n}\rightarrow S^{n+2}$ is a differential
embedding. According to Alexander duality, we have $%
H_{2}(S^{n+2}-f(S^{n}))=0 $ and $H_{1}(S^{n+2}-f(S^{n}))=H_{1}(G;\mathbb{Z}%
)=0.$ By Hopf's theorem in Lemma \ref{lem1} (with the coefficient $V=\mathbb{%
Z}$), the second homology group $H_{2}(G;\mathbb{Z})=0.$ Let $\alpha
:S^{1}\rightarrow S^{n+2}-f(S^{n}) $ be an embedding such that $\alpha
(S^{1})$ bounds a small $2$-disc in $S^{n+2}$ that intersects $f(S^{n})$
transversally at exactly one point. Then the group $\pi
_{1}(S^{n+2}-f(S^{n}))$ is normally generated by the element represented by $%
\alpha .$ For more details, see the proof of Lemma 2 in Kervaire \cite{ker}. This
proves the weight $w(G)=1$ and finishes the proof.
\end{proof}

Corollary \ref{knot} is Theorem 1 in Kervaire \cite{ker}. Similarly, we can show that
Theorem 3 in \cite{ker} concerning the fundamental groups of links is also a
corollary of Theorem \ref{th3}. That is for an integer $n\geq 3,$ a finitely
presentable group $G$ is isomorphic to $\pi _{1}(S^{n+2}-L_{k})$ for some $k$
disjointly embedded $n$-spheres $L_{k}$ if and only if $H_{1}(G;\mathbb{Z})=%
\mathbb{Z}^{k},$ $w(G)=k$ and $H_{2}(G;\mathbb{Z})=0$ (cf. Theorem 3 in \cite%
{ker}). The proof is of the same pattern as that of Corollary \ref{knot} and
will be left to the reader.

\subsection{Zero-in-the-spectrum conjecture}

In the notation of the Introduction, \emph{zero not belonging to the spectrum%
} of $\Delta =\Delta _{\ast }$ can also be expressed as the vanishing of $%
H_{\ast }(M;C_{r}^{\ast }(\pi _{1}(M))).$ The following is a version of the
zero-in-the-spectrum conjecture using homology. For more details, we refer
the reader to the book of L\"{u}ck \cite{LU}.

\begin{conjecture}
Let $M$ be a closed, connected, oriented and aspherical manifold with
fundamental group $\pi .$ Then for some $i\geq 0,$ $H_{i}(X;C_{r}^{\ast
}(\pi ))\neq 0.$
\end{conjecture}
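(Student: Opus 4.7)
My plan is to reduce the conjecture to an algebraic question via asphericity and then look for a non-vanishing class coming from the fundamental class of $M$. I should be candid up front: this is Gromov's classical zero-in-the-spectrum conjecture, which remains open, and the surgery construction of Theorem \ref{th3} offers no leverage here since it typically destroys asphericity. The corollaries of the previous subsection show only that the asphericity hypothesis is essential to the conjecture; they say nothing about closed aspherical manifolds themselves.

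First I would use $M\simeq B\pi$ to identify
\[
H_{i}(M;C_{r}^{\ast }(\pi))\cong H_{i}(\pi;C_{r}^{\ast }(\pi))
\]
for all $i\geq 0$, turning the geometric question into one about group homology with $C^{\ast }$-coefficients. Second, I would invoke Poincar\'{e} duality with $C_{r}^{\ast }(\pi)$-coefficients (Lemma \ref{poin}, suitably extended to non-commutative coefficient modules) to move freely between $H_{i}$ and $H^{n-i}$, so it suffices to produce one non-zero class in any single degree between $0$ and $n=\dim M$. The natural candidate is the image of the fundamental class $[M]\in H_{n}(M;\mathbb{Z})$ under the unit map $\mathbb{Z}\to C_{r}^{\ast }(\pi)$; by Kasparov--Baum--Connes duality this corresponds, after passage to $K$-theory, to the assembly class $\mu([M])\in K_{n}(C_{r}^{\ast }(\pi))$.

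The main obstacle is precisely showing this assembly class is non-zero. Non-vanishing of $\mu([M])$ is essentially the content of the strong Novikov / Baum--Connes statement for $\pi$, which is known in large classes (a-T-menable groups, hyperbolic groups, groups acting properly on $\mathrm{CAT}(0)$-cube complexes, \ldots) but open in full generality, and a direct homological attack avoiding $K$-theory runs into the same difficulty of distinguishing the fundamental class in $C_{r}^{\ast }$-coefficients from coboundaries. Consequently the strategy above reduces the conjecture to a deep open problem in noncommutative topology and yields no unconditional proof; any genuine resolution would require substantially new analytic or $K$-theoretic input beyond the surgery-theoretic methods of this paper.
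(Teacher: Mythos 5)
You have correctly recognized that this is not a theorem of the paper at all: it is the statement of Gromov's (still open) zero-in-the-spectrum conjecture, recorded in the paper only to motivate Corollary \ref{zero}, which shows the conjecture fails once the asphericity hypothesis is dropped. The paper offers no proof of it, and you rightly observe that the surgery machinery of Theorem \ref{th3} cannot reach aspherical targets since attaching handles destroys asphericity. Your sketch --- identify $H_i(M;C_r^\ast(\pi))$ with $H_i(\pi;C_r^\ast(\pi))$ via $M\simeq B\pi$, use Poincar\'e duality with noncommutative coefficients to reduce to exhibiting one nonzero class, and propose the assembly image of the fundamental class in $K$-theory --- is a reasonable and well-known line of attack, and you are correct that it reduces the problem to (a form of) the strong Novikov conjecture, itself open in general. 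So no gap to report: your honest conclusion that no unconditional proof is available matches the actual status of the statement in the paper.

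One small caveat worth recording: the passage from ``$[M]$ has nonzero assembly image in $K_n(C_r^\ast\pi)$'' back to the homological non-vanishing $H_i(M;C_r^\ast\pi)\neq 0$ for some $i$ is itself not automatic; the $K$-theoretic and homological formulations of zero-in-the-spectrum are related but not interchangeable without additional work (one typically passes through $L^2$-Betti numbers or Novikov--Shubin invariants as in L\"uck's book). This does not affect your overall assessment, but it means even the conditional reduction you sketch would need more care than the outline suggests.
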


If the condition that $X$ is aspherical is dropped, the following corollary,
which is a special case of Theorem \ref{th3} when $R=C_{\mathbb{R}}^{\ast
}(G)\ $and $\pi =1$, shows the above conjecture is not true. This result is
a generalization of the results obtained by Farber-Weinberger \cite{FW} and
Higson-Roe-Schick \cite{hig}. Recall that the real $C^{\ast }$-algebra $C_{%
\mathbb{R}}^{\ast }(G)$ is a finitely $G$-dense ring.

\begin{corollary}[\protect Higson-Roe-Schick \cite{hig}]
\label{zero}Let $G$ be a finitely presented group with the homology groups
\begin{equation*}
H_{0}(G;C_{r}^{\ast }(G))=H_{1}(G;C_{r}^{\ast }(G))=H_{2}(G;C_{r}^{\ast
}(G))=0.
\end{equation*}%
For every integer $n\geq 6$ there is a closed manifold $M$ of dimension $n$
such that $\pi _{1}(Y)=G$ and for each integer $n\geq 0,$ the homology group
$H_{n}(Y;C_{r}^{\ast }(G))=0.$
\end{corollary}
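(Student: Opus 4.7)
The plan is to apply Theorem \ref{th3} with $X = S^{n}$ ($n \geq 6$), trivial fundamental group $\pi = 1$, the trivial homomorphism $\alpha \colon 1 \to G$, and coefficient ring $R = C_{\mathbb{R}}^{\ast}(G)$, which is finitely $G$-dense by Lemma 2.1. Every hypothesis holds trivially: $\alpha(\pi) = \{1\}$ is finitely presented; $H_{1}(\alpha)$ and $H_{2}(\alpha)$ are respectively injective and surjective because their sources vanish; $H_{1}(G, 1; R) = H_{1}(G; R) = 0$ (by assumption) is stably free; and $S^{n}$ is spin. The theorem then yields a closed $R$-orientable $n$-manifold $Y$ with $\pi_{1}(Y) = G$ and a cobordism $W$ from $S^{n}$ to $Y$ such that $H_{q}(S^{n}; R) \cong H_{q}(W; R)$ for every $q \geq 2$.

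Next I would pin down $H_{*}(W; R)$. The theorem's isomorphism takes care of $2 \leq q \leq n - 1$ (vanishing) and $q = n$ (giving $R$); the low-degree identifications $H_{0}(W; R) = H_{0}(G; R)$ and $H_{1}(W; R) = H_{1}(G; R)$ combined with the hypothesis kill $q = 0, 1$; and $H_{n+1}(W; R) = 0$ since $W$ has nonempty boundary. So $H_{*}(W; R)$ is concentrated in degree $n$ with value $R$. The long exact sequence of $(W, S^{n})$ then shows $H_{*}(W, S^{n}; R)$ is concentrated in degree $1$ with value $R$ (coming from $H_{0}(S^{n}; R) = R \to H_{0}(W; R) = 0$), and Poincar\'{e}--Lefschetz duality (Lemma \ref{poin}) converts this to $H^{*}(W, Y; R) \cong H_{n+1-*}(W, S^{n}; R)$, concentrated in degree $n$ with value $R$.

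Finally, I would deduce $H_{q}(Y; R) = 0$ for every $q$. Dually to the construction of Theorem \ref{th3}, $W$ is obtained from $Y \times I$ by attaching handles of dimensions $n-2$, $n-1$, and $n$, so the pair $(W, Y)$ is $(n-3)$-connected. The homology long exact sequence of $(W, Y)$, together with the vanishing of $H_{q}(W; R)$ for $q \neq n$, then gives $H_{q}(Y; R) \cong H_{q}(W; R) = 0$ for $0 \leq q \leq n - 4$. Applying Poincar\'{e} duality on the closed $R$-orientable manifold $Y$ converts this to $H^{q}(Y; R) \cong H_{n-q}(Y; R) = 0$ for $q \geq 4$. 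The remaining degrees are then handled via the cohomology long exact sequence of $(W, Y)$ together with Poincar\'{e} duality on $W$, expressing $H^{q}(W; R) \cong H_{n+1-q}(W, \partial W; R)$ and computing the right-hand side from the long exact sequence of $(W, \partial W)$ using the values of $H_{*}(W; R)$, $H_{*}(S^{n}; R)$, and the vanishings already established for $H_{*}(Y; R)$.

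The main obstacle is the middle-degree homology of $Y$, where Poincar\'{e} duality on $Y$ identifies groups with their own duals and no direct comparison with $W$ is available; the resolution lies in the full interlocking web of Poincar\'{e} duality isomorphisms on both $W$ and $Y$, combined with the hypothesis $H_{i}(G; R) = 0$ for $i = 0, 1, 2$, which forces vanishing in every degree.
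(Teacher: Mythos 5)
Your setup is correct and matches the paper: take $X=S^n$, $\pi=1$, $\alpha\colon 1\to G$, $R=C_{\mathbb{R}}^{\ast}(G)$, invoke Theorem~\ref{th3}, and then use duality to transport the vanishing that the theorem gives in low degrees to the remaining degrees. Two smaller issues before the main one. First, your computation of the relative group is wrong: the long exact sequence for the pair $(BG,\mathrm{pt})$ gives
$0=H_1(1;R)\to H_1(G;R)=0\to H_1(G,1;R)\to H_0(1;R)=R\to H_0(G;R)=0$,
so $H_1(G,1;R)\cong R$, not $0$. This is harmless (a free module is stably free, which is all Theorem~\ref{th3} needs), but as written your justification is incorrect; the paper carries out exactly this computation. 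Second, the statement and conclusion are phrased in terms of the complex reduced $C^{\ast}$-algebra $C_r^{\ast}(G)$, but the finitely $G$-dense ring supplied by Lemma~2.1 is the \emph{real} algebra $C_{\mathbb{R}}^{\ast}(G)$. You apply Theorem~\ref{th3} with $R=C_{\mathbb{R}}^{\ast}(G)$ without explaining why the hypothesis on $C_r^{\ast}(G)$-homology transfers, or why the conclusion for $C_{\mathbb{R}}^{\ast}(G)$-homology yields the stated one for $C_r^{\ast}(G)$. The paper handles this by citing Proposition~4.8 of Ye~\cite{Ye}.

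The real gap is the final step, and you essentially acknowledge it yourself. Your ``interlocking web of Poincar\'e duality isomorphisms'' is not an argument. Here is why it cannot close on its own. The construction and a duality/LES bookkeeping give $H_q(Y;R)=0$ for $q$ up to roughly $n-2$, and Poincar\'e duality on $Y$ translates that into $H^q(Y;R)=0$ for $q\ge 2$. What remains are $H_{n-1}(Y;R)\cong H^1(Y;R)\cong H^1(G;R)$ and $H_n(Y;R)\cong H^0(Y;R)\cong R^G$. The hypothesis gives the vanishing of the \emph{homology} groups $H_0(G;R)=R_G$ and $H_1(G;R)$, and Poincar\'e duality never relates $H_q$ to $H^q$ in the same degree; over a noncommutative ring like $C_{\mathbb{R}}^{\ast}(G)$ there is no universal coefficient theorem to do so either. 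The implication $H_q(G;R)=0\Rightarrow H^q(G;R)=0$ is exactly what must be supplied, and it is a $C^{\ast}$-algebraic fact, not a formal one. The paper closes this gap by invoking Farber's Poincar\'e duality and universal-coefficient results for $L^2$-homology (Theorems 6.6 and 6.7 of Farber~\cite{fab}), which are specific to Hilbert-module coefficients. Without something of that nature, your proof does not establish the vanishing in the top two degrees, and the claimed conclusion does not follow.
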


\begin{proof}
According to Proposition 4.8 in Ye \cite{Ye}, the vanishing of lower degree
homology groups with coefficients $C_{r}^{\ast }(G)$ is the same as that
with coefficients $C_{\mathbb{R}}^{\ast }(G).$ Then we have%
\begin{equation*}
H_{0}(G;C_{\mathbb{R}}^{\ast }(G))=H_{1}(G;C_{\mathbb{R}}^{\ast
}(G))=H_{2}(G;C_{\mathbb{R}}^{\ast }(G))=0.
\end{equation*}%
Note that the real $C^{\ast }$-algebra $C_{\mathbb{R}}^{\ast }(G)$ is a
finitely $G$-dense ring. Let $\alpha :\pi =1\rightarrow G,$ $R=C_{\mathbb{R}%
}^{\ast }(G)$ and $X=S^{n}$ in Theorem \ref{th3}. By the long exact sequence
of homology groups%
\begin{eqnarray*}
\cdots &\rightarrow &H_{1}(1;C_{\mathbb{R}}^{\ast }(G))\rightarrow
H_{1}(G;C_{\mathbb{R}}^{\ast }(G))\rightarrow H_{1}(G,1;C_{\mathbb{R}}^{\ast
}(G)) \\
&\rightarrow &H_{0}(1;C_{\mathbb{R}}^{\ast }(G))\rightarrow H_{0}(G;C_{%
\mathbb{R}}^{\ast }(G))\rightarrow 0,
\end{eqnarray*}%
we have
\begin{equation*}
H_{1}(G,\pi ;C_{\mathbb{R}}^{\ast }(G))=H_{0}(1;C_{\mathbb{R}}^{\ast
}(G))=C_{\mathbb{R}}^{\ast }(G),
\end{equation*}%
which is a free $C_{\mathbb{R}}^{\ast }(G)$-module. Therefore, there exists
a closed manifold $Y$ by Theorem \ref{th3} such that for any integer $0\leq
q\leq [ n/2 ],$ the homology group $H_{q}(Y;C_{\mathbb{R}}^{\ast
}(G))=0.$ According to the universal coefficients theorem and Poincar\'{e}
duality for $L^{2}$-homology (cf. Theorem 6.6 and Theorem 6.7 in Farber \cite{fab}), we get that for any integer $q\geq 0,$ the homology group $H_{n}(Y;C_{r}^{\ast }(G))=0.$
\end{proof}

\bigskip

\noindent \textbf{Acknowledgements}

The author wants to thank Professor Wolfgang L\"{u}ck for supporting from
his Leibniz-Preis a visit to Hausdorff Center of Mathematics in University
of Bonn from April 2011 to July 2011, when parts of this paper were written.
He is also grateful to his advisor Professor A. J. Berrick for many helpful
discussions.\bigskip

Department of Mathematics, National University of Singapore, Kent Ridge
119076, Singapore.\\
Current address:

Mathematical Institute, University of Oxford, 24-29 St Giles', Oxford, OX1
3LB, UK. \\
Eamil: Shengkui.Ye@maths.ox.ac.uk

\end{document}